\pgfplotsset{compat=newest}
\theoremstyle{definition}
\newtheorem{defi}{Definition}
\newtheorem{theorem}{Theorem}
\newtheorem{prop}{Proposition}
\newtheorem{asum}{Assumption}
\newtheorem{coro}{Corollary}
\newtheorem{lem}{Lemma}
\newtheorem{rem}{Remark}
\newcommand{\R}{\mathbb{R}}
\newcommand{\N}{\mathbb{N}}
\newcommand{\C}{\mathbb{C}}
\newcommand{\dd}{\mathrm{d}}
\newcommand{\eps}{\varepsilon}
\newcommand{\loc}{\text{loc}}
\newcommand{\Ai}{\mathcal{A}}
\newcommand{\Bi}{\mathcal{B}}
    \pgfplotsset{
        colormap={parula}{
            rgb255=(53,42,135)
            rgb255=(15,92,221)
            rgb255=(18,125,216)
            rgb255=(7,156,207)
            rgb255=(21,177,180)
            rgb255=(89,189,140)
            rgb255=(165,190,107)
            rgb255=(225,185,82)
            rgb255=(252,206,46)
            rgb255=(249,251,14)
        },
    }
\newcommand\tint{\textstyle\int\nolimits}
\newcommand{\g}[1]{\boldsymbol{#1}}
\def\ph{\varphi}
\def\top{\text{top}}
\def\bot{\text{bot}}
\def\app{\text{app}}
\def\cH{\mathcal{H}}
\def\cO{\mathcal{O}}
\def\tH{\text{H}}
\def\tL{\text{L}}
\DeclareMathOperator\supp{\text{supp}}
\title{Layer stripping approach to reconstruct shape defects in waveguides using locally resonant frequencies}
\author[1,*]{Angèle Niclas}
\author[2]{Laurent Seppecher}
\affil[1]{Centre de Mathématiques Appliquées, \'Ecole Polytechnique, France}
\affil[2]{Institut Camille Jordan, \'Ecole Centrale Lyon, France}
\affil[*]{Corresponding author: angele.niclas@polytechnique.edu}
\date{}
\begin{document}
\maketitle 

\begin{abstract}
This article present a new method to reconstruct slowly varying width defects in 2D waveguides using one-side section measurements at locally resonant frequencies. At these frequencies, locally resonant modes propagate in the waveguide up to a ``cut-off'' position. In this particular point, the local width of the waveguide can be recovered. Given multi-frequency measurements taken on a section of the waveguide, we perform an efficient layer stripping approach to recover shape variations slice by slice. It provides an $\text{L}^\infty$-stable method to reconstruct the width of a slowly monotonous varying waveguide. We validate this method on numerical data and discuss its limits.  
\end{abstract}


\section{Introduction}

In previous work \cite{niclas1}, we have presented the analysis of recovering a slowly varying shape of waveguides from surface measurement using locally resonant frequencies. This method was suited for waveguides such as elastic plates where surface measurements are available but is not appropriate for acoustic waveguides such as pipes or air ducts where data are often measured on one section of the waveguide \cite{kharrat1,bourgeois1}.

In the present work, we tackle the shape recovery problem from one-side section measurements only. At locally resonant frequencies, the usual Born approximation (see \cite{colton1,bonnetier1}) is not valid and recovering the shape variation of the waveguide can be seen as a nonlinear inverse scattering problem.

\subsection{General description of the problem}
We describe a 2D varying acoustic waveguide by
\begin{equation} \Omega:=\left\{(x,y)\in \R^2 \ |\ 0<y<h(x)\right\},
\end{equation}
where $h\in \mathcal{C}^2(\R)\cap W^{2,\infty}(\R)$ is a positive profile function defining the top boundary (see an illustration in Figure \ref{6_meas2}). The bottom boundary is assumed to be flat, but a similar analysis could be done when both boundaries vary. In the time-harmonic regime, the wavefield $u_k$ satisfies the Helmholtz equation at frequency $k>0$ with Neumann boundary conditions
\begin{equation}\label{6_eqdebut} 
\left\{\begin{array}{rll} \Delta u_k+k^2u_k &=- f & \text{ in } \Omega,\\ \partial_\nu u_k &=b & \text{ on } \partial\Omega,\end{array}\right. 
\end{equation} 
where $k$ is the frequency, $f$ is an interior source term, and $b$ is a boundary source term. In this work, a waveguide is said to be slowly varying when there exists a small parameter $\eta>0$ such that $\Vert h'\Vert_{\text{L}^\infty(\R)}\leq \eta$ and $\Vert h''\Vert_{\text{L}^\infty(\R)}\leq \eta^2$. We focus here on the recovery of the shape function $h$ modeling the top boundary of the waveguide. Controlled sources $f$ and/or $b$ generate wavefields $u_k$ in $\Omega$ for some frequencies $k>0$. We assume the knowledge of the wavefield measurements $u_k(x,y)$ on the section $\Sigma:=\{x^{\text{meas}}\}\times (0,h(x^{\text{meas}}))\subset\Omega$ for some coordinate $x^{\text{meas}}\in \R$ (see an illustration in Figure \ref{6_meas2}). This inverse problem is good modeling of the monitoring of pipes, optical fibers, or train rails for instance (see \cite{honarvar1,kharrat1,kharrat2}).

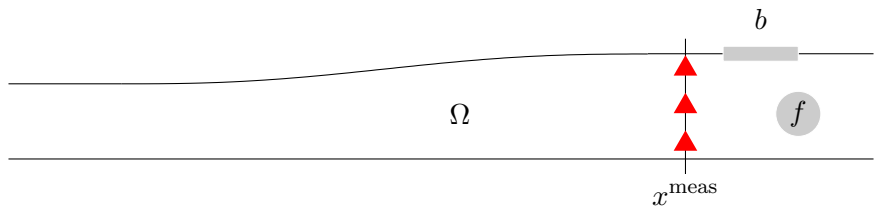
\begin{figure}[h]
\begin{center}
\begin{tikzpicture} 
\draw (-1.5,0) -- (10,0);
\draw (-1.5,1) -- (0,1); 
\draw (7,1.4) -- (10,1.4); 
\draw (4.5,0.6) node{$\Omega$}; 
\draw [domain=0:7, samples=100] plot (\x,{1+12/7/7/7/7/7*\x*\x*\x*(\x*\x/5-7*\x/2+49/3+0.02)});  
\draw (7.5,-0.2)--(7.5,1.6);
\draw (7.5,-0.2) node[below]{$x^{\text{meas}}$};
\draw (7.5,0.2) node[regular polygon,regular polygon sides=3, fill=red, scale=0.5]{};
\draw (7.5,0.7) node[regular polygon,regular polygon sides=3, fill=red, scale=0.5]{};
\draw (7.5,1.2) node[regular polygon,regular polygon sides=3, fill=red, scale=0.5]{};
\draw [white,fill=gray!40] (8,1.3)--(9,1.3)--(9,1.5)--(8,1.5)--(8,1.3); 
\draw (8.5,1.6) node[above]{$b$}; 
\draw [white,fill=gray!40] (9,0.6) circle (0.3);
\draw (9,0.6) node{$f$};
\end{tikzpicture}
\end{center}
\caption{\label{6_meas2} Set up of the inversion in the slowly variable waveguide $\Omega$. An internal source $f$ and/or a boundary source $b$ generate a wavefield propagating in the waveguide and measured on the section $x=x^{\text{meas}}$, represented by red triangles.}
\end{figure}

\subsection{Scientific context}

The detection and reconstruction of shape defects in a waveguide are mentioned in different works. In articles \cite{lu1,abra1,abra2}, the authors use a conformal mapping to map the geometry of the perturbed waveguide to the geometry of a regular waveguide, and suggest to inverse the mapping to recover the width defects. Different linear inversion methods based on the scattered field analysis are also developed in \cite{norgren1,bourgeois1,dediu1}. All these works perform the reconstruction using a single propagation frequency to perform detections and reconstructions of defects. 

Our work concerns a different approach, also used in \cite{bao1,bao2}, where we assume that the data is available for a whole interval of frequencies. This usually provides additional information that should help localize and reconstruct the defect. Moreover, the use of multi-frequency data often provides uniqueness of the reconstruction (see \cite{acosta1}) and better stability (see \cite{bao3,isakov1,sini1}).

The article \cite{bonnetier1} presents a linear method to recover small width variations using back scattering data. However, this work avoids all the locally resonant frequencies of the waveguide. In \cite{niclas1}, we showed that these locally resonant frequencies could be used to recover slowly varying width defects given surface measurements. Given the high sensitivity of the reconstructions obtained using this method, we choose to use the same approach and work only with locally resonant frequencies in this article. 

If $k$ is chosen such that $k=N\pi/h(x^\star_k)$ for some $N\in \N$ and $x^\star_k\in \R$, $k$ is called a locally resonant frequency, $x^\star_k$ is called a resonant position and the Helmholtz problem is not well posed in general (see \cite{bourgeois1}). Nevertheless, it is proved in \cite{bonnetier2} that a unique solution exits as long as the waveguide is slowly varying. The same work provides a suitable approximation of the wavefield that explicitly depends on $x^\star_k$. This approximation is used in \cite{niclas1} with surface measurements to recover the position of $x^\star_k$, and then to reconstruct the shape function $h$. We now aim to use this analysis to solve the inverse problem from section measurements: 
\begin{equation}
\text{Find } \quad h\quad  \text{ from } \quad u_k(x^{\text{meas}},y) \quad \forall y\in(0,h(x^{\text{meas}})),\quad\forall k\in \R_+ \text{ where } k \text{ is locally resonant}.
\end{equation}

Our reconstruction method is inspired by the ideas presented in \cite{bonnetier2} and use wavefield measurements of $u_k$ to recover the position $x^\star_k$. Since $h(x^\star_k)=N\pi/k$, it gives up the information about the waveguide width at this precise location. By taking different locally resonant frequencies $k$ and finding corresponding resonant location $x^\star_k$, we obtain a complete approximation of the width $h$. 

However, the realization strongly differs. Indeed, section measurements contain much less information than surface measurements, and we need to recover the localization of $x^\star_k$ from the value of the resonant mode in one point instead of a whole interval. To circumvent this difficulty, we implement in this paper a layer stripping approach inspired by \cite{sylvester1,somersalo1}. 

\subsection{Main steps of the reconstruction method}

The main steps of the proposed reconstruction method are summarized here: 
\begin{enumerate}
\item From section measurements $ u_k(x^{\text{meas}},y)$, we select frequencies $k$ that makes a chosen mode $N\in\N$ resonant and we filter the data (orthogonal projection) to focus only on the $N$-th mode.
\item We prove that this data is approached by the function~$\Phi\circ \zeta(k)$ where 
\begin{equation}\label{phizeta}
\Phi(x):=\sin(x+\pi/4)\exp(ix+i\pi/4), \qquad \zeta(k):=\int_{x^\star_{k}}^{x^{\text{meas}}} \sqrt{k^2-\frac{N^2\pi^2}{h(x)^2}}\dd x.
\end{equation}
In this function, the dependency on the resonant point $x_k^\star$ and on the shape $h$ is explicit. This point is detailed in Section 3.1 and Corollary \ref{6_coro}. 
\item There is a left inverse function $\Phi^{-1}$ modulus $\pi$. However, to have access to $\zeta(k)$, we need to eliminate the modulus. Using the fact that $k\mapsto \zeta(k)$ is increasing, we prove in Section 3.2 that we can recover an approximation of $\zeta(k)$ on a discrete set of frequencies $k_1,\dots,k_I$ if the discretization step is small enough.
\item From $\zeta(k_1),\dots,\zeta(k_I)$, we develop a layer stripping approach to recover each corresponding resonant $x^\star_{k_i}$ using the fact that $h(x^\star_{k_i})=N\pi/k_i$ (see a scheme in Figure \ref{layer_stripping}) and we perform a triangular linear recovery of each $x^\star_{k_i}$ as explained in Proposition \ref{6_mMV}. 
\item Finally, using the recovery of each $x^\star_{k_i}$ and the fact that $h(x^\star_{k_i})=N\pi/k_i$, we provide an approximation of $h$ whose error is quantified in Theorem \ref{6_th2}. 
\end{enumerate}

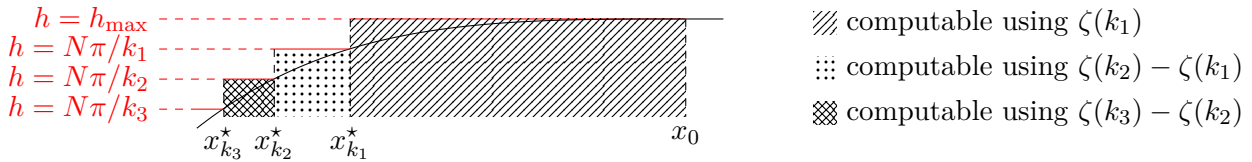
\begin{figure}[h]\begin{center}
\begin{tikzpicture}
\draw (0,1)--(0.5,1);
\draw[dashed] (0,1)--(0,-0.3) node[below]{$x_0$};  
\draw[red] (-4.46,1)--(0,1); 
\draw[dashed] (-4.46,1)--(-4.46,-0.3) node[below]{$x^{\star}_{k_1}$}; 
\fill[color=gray!20, pattern=north east lines] (0,1)--(0,-0.3)--(-4.46,-0.3)--(-4.46,1)--(0,1); 

\fill[color=gray!20, pattern=dots] (-5.47,0.6)--(-5.47,-0.3)--(-4.46,-0.3)--(-4.46,0.6)--(-5.47,0.6); 
\fill[color=gray!20, pattern=crosshatch] (-5.47,0.2)--(-5.47,-0.3)--(-6.14,-0.3)--(-6.14,0.2)--(-5.47,0.2); 
\draw[red] (-7,1) node[left]{$h=h_{\max}$};
\draw[dashed,red] (-7,1)-- (-4.46,1); 
\draw [domain=-6.5:0, samples=100] plot (\x,{1+2*12/7/7/7/7/7*\x/2*\x/2*\x/2*(\x/2*\x/2/5-7*\x/2/2+49/3+0.02)});
\draw[red] (-5.47,0.6)--(-4.46,0.6); 
\draw[dashed] (-5.47,0.6)--(-5.47,-0.3) node[below]{$x^{\star}_{k_2}$}; 
\draw[red] (-7,0.6) node[left]{$h=N\pi/k_1$};
\draw[dashed,red] (-7,0.6)--(-5.47,0.6); 
\draw[red] (-6.14,0.2)--(-5.47,0.2); 
\draw[dashed] (-6.14,0.2)--(-6.14,-0.3);
\draw (-6.14,-0.3) node[below]{$x^{\star}_{k_3}$}; 
\draw[red] (-6.5,-0.2)--(-6.14,-0.2); 
\draw[dashed,red] (-7,0.2)--(-6.14,0.2); 
\draw[red] (-7,0.2) node[left]{$h=N\pi/k_2$};
\draw[dashed,red] (-7,-0.2)--(-6.5,-0.2); 
\draw[red] (-7,-0.2) node[left]{$h=N\pi/k_3$};
\fill[color=gray!20, pattern=north east lines] (2,0.8)--(2,1.1)--(1.7,1.1)--(1.7,0.8)--(2,0.8); 
\fill[color=gray!20, pattern=dots] (2,0.2)--(2,0.5)--(1.7,0.5)--(1.7,0.2)--(2,0.2); 
\fill[color=gray!20, pattern=crosshatch] (2,-0.4)--(2,-0.1)--(1.7,-0.1)--(1.7,-0.4)--(2,-0.4); 
\draw (2,0.95) node[right]{computable using $\zeta(k_1)$};
\draw (2,0.35) node[right]{computable using $\zeta(k_2)-\zeta(k_1)$};
\draw (2,-0.25) node[right]{computable using $\zeta(k_3)-\zeta(k_2)$};
\end{tikzpicture}\end{center}
\caption{\label{layer_stripping} Scheme of the layer stripping approach to recover an approximation of $h$. At each step, we approach the local width by $N\pi/k_i$ and we look for the optimal value $x^{\star}_{k_{i+1}}$ to fit the measurements of $\zeta(k_i)$. }
\end{figure}

The paper is organized as follows. In section 2, we briefly recall the needed results on the modal decomposition and the study of the forward problem. In section 3, we study the inverse problem with measurements taken at the section of the waveguide and provide a stability result for the reconstruction. Finally, in section 4, we illustrate our method with various numerical reconstructions.

\subsection{Notations}

The varying waveguide is denoted by $\Omega$, its boundary by $\partial\Omega$ and the subscript “top” (resp. “bot”) indicates the upper boundary of the waveguide (resp. lower). We denote $\nu$ the outer normal unit vector. For every $r>0$, we set $\Omega_r= \{(x,y)\in \Omega\, |\, |x|<r\}$. Spaces $\tH^1$, $\tH^2$, $W^{1,1}$, $\tH^{1/2}$ over $\Omega$ or $\R$ are classic Sobolev spaces. The Airy function of the first kind (resp. second kind) is denoted by $\Ai$ (resp. $\Bi$). They are linear independent solutions of the Airy equation $y''-xy=0$ (see \cite{abramowitz1} for more results about Airy functions). See in Figure \ref{airy} the graph of these two functions. The term $\delta_{x=s}$ denotes the Dirac distribution at the point $s\in \R$ and the function $\textbf{1}_{E}$ is the characteristic function of the set $E$. Finally, the notation $\{a:b:I\}$ designates the uniform discretization of the interval $[a,b]$ with $I$ points. 

\begin{figure}[h!]
\begin{center}
\scalebox{.5}{\input{airy}}
\caption{\label{airy} Representation of the Airy functions $\Ai$ and $\Bi$.}
\end{center}
\end{figure}

\section{Forward problem and locally resonant modal data}

Before studying the inverse problem associated with the reconstruction of the width in a varying waveguide, we recall some needed tools to study the forward problem. These results where already presented in \cite{niclas1} and their proofs can be found in \cite{bonnetier1,bourgeois1}. However, Theorem \ref{6_th1} is slightly different from the one stated in \cite{niclas1} due to the change of measurements area. 

A useful tool when working in waveguides is the modal decomposition. The following definition provides a modal decomposition in varying waveguides:
\begin{defi}\label{6_def:modes} We define the sequence of functions $(\ph_n)_{n\in\N}$ by
\begin{equation}\label{6_phin}
\forall (x,y)\in \Omega,\quad \ph_n(x,y) :=
\left\{\begin{array}{cl}
1/\sqrt{h(x)}\quad  &\text{if } n=0, \\ 
 \frac{\sqrt{2}}{\sqrt{h(x)}}\cos\left(\frac{n\pi y}{h(x)}\right)\quad &\text{if } n\geq 1,
\end{array}\right.
\end{equation}
which for any fixed $x\in \R$ defines an orthonormal basis of $\tL^2(0,h(x))$.\end{defi}

Hence, a solution $u_k \in \tH^2_{\loc}\big(\Omega\big)$ of \eqref{6_eqdebut} admits a unique modal decomposition
\begin{equation}\label{6_decmode}
 u_k(x,y)=\sum_{n\in \N} u_{k,N}(x) \ph_n(x,y)\quad\text{where}\quad  u_{k,N}(x):=\int_0^{h(x)} u_k(x,y)\ph_n(x,y)\dd y.
\end{equation}
Note that $ u_{k,N}$ does not satisfy in general any nice equation. However, when $h$ is constant (outside of $\supp(h')$), it satisfies an equation of the form $u_{k,N}''+k_n^2 u_{k,N}=- g_n$ where $k_n^2=k^2-n^2\pi^2/h^2$ is the wavenumber. When $h$ is variable, the decomposition \eqref{6_decmode} motivates the following definition:

\begin{defi}\label{6_def:wavenumber}  The local wavenumber function of the mode $n\in\N$ is the complex function $k_n:\R\to \C$ defined by
\begin{equation}
k_n^2(x):=k^2-\frac{n^2\pi^2}{h(x)^2},
\end{equation}
with $\text{Re}(k_n), \text{Im}(k_n)\geq 0$. 
\end{defi}

In this work, as $h(x)$ is non constant, $k_n(x)$ may vanish for some $x\in\R$ and change from a positive real number to a purely imaginary number. We then distinguish three different situations: 

\begin{defi}  A mode $n\in\N$ falls in one of these three situations:
\begin{enumerate}
\item If $n<kh(x)/\pi$ for all $x\in\R$ then $k_n(x)\in(0,+\infty)$ for all $x\in\R$ and the mode $n$ is called propagative. 
\item If $n>kh(x)/\pi$ for all $x\in\R$ then $k_n(x)\in i(0,+\infty)$ for all $x\in\R$ and the mode $n$ is called evanescent. 
\item If there exists $x^\star_k\in \R$ such that $n=kh(x^\star_k)/\pi$ the mode $n$ is called locally resonant. Such points $x^\star_k$ are called locally resonant points, and there are simple if $h'(x^\star_k)\neq 0$, and multiple otherwise. 
\end{enumerate}
A frequency $k>0$ for which there exists at least a locally resonant mode is called a locally resonant frequency.  
\end{defi}

Using the wavenumber function, one can adapt the classic Sommerfeld (or outgoing) condition, defined in \cite{bonnetier1} for regular waveguides, to general varying waveguides $\Omega$. This condition is used to guarantee uniqueness for the source problem given in equation \eqref{6_eqdebut}.

\begin{defi}\label{6_def:outgoing}  A wavefield $ u_k \in \tH^2_{\loc}\big( \Omega\big)$ is said to be outgoing if it satisfies 
\begin{equation} \label{6_sommer}\left|  u_{k,N}'(x)\frac{x}{|x|}-ik_n(x) u_{k,N}(x) \right| \underset{|x|\rightarrow +\infty}{\longrightarrow} 0 \qquad \forall n\in \N,
\end{equation}
where $u_{k,N}$ is given in \eqref{6_decmode}. 
\end{defi}

In all this work, we make the following assumptions:
\begin{asum}\label{6_def:slow} We assume that $h\in \mathcal{C}^2(\R)\cap W^{2,\infty}(\R)$ with $h'$ compactly supported and that 
\begin{equation}\nonumber
\forall x\in \R \quad h_{\min} \leq h(x) \leq h_{\max} \quad \text{ for some }\quad  0<h_{\max}<h_{\min}<\infty.
\end{equation}
We also assume that $h(x)=h_{\min}$ or $h(x)=h_{\max}$ if $x\notin \supp(h')$. For such a function we define a parameter $\eta>0$ that satisfies
\begin{equation}\nonumber
\Vert h'\Vert_{\text{L}^\infty(\R)} <\eta\quad\text{ and } \quad \Vert h''\Vert_{\text{L}^\infty(\R)}<\eta^2.
\end{equation}
\end{asum}
%
%

The forward source problem is defined for every frequency by 
\begin{equation}\label{6_eqmatlab} (\mathcal{H}_k) : \quad 
\left\{\begin{array}{cl} \Delta u_k +k^2 u_k= -f  & \text{ in } \Omega, \\
\partial_\nu u_k =b_\top &\text{ in } \partial\Omega_\text{top},\\
\partial_\nu u_k =b_\bot &\text{ in } \partial\Omega_\text{bot}, \\
u_k\text{ is outgoing.} \end{array}\right. 
\end{equation}
As explained in \cite{bourgeois1}, this problem is not well-posed when the set $\{x \,|\, k_n(x)=0\}$ is a non-trivial interval of $\R$. This especially happens when $k=n\pi/h_{\min}$ or $k=n\pi/h_{\max}$. We then avoid these two situations and we set
\begin{equation}\label{6_delta}
\delta(k):=\min_{n\in \N}\left(\sqrt{\left|k^2-\frac{n^2\pi^2}{{h_{\min}}^2}\right|},\sqrt{\left|k^2-\frac{n^2\pi^2}{{h_{\max}}^2}\right|} \right)>0.
\end{equation}
From now on, we define $(f_n)_{n\in \N}$ the modal decomposition of $f$, and
\begin{equation}\label{6_gn}
g_n(x)=\frac{f_n(x)}{\sqrt{h(x)}}+\varphi_n(1)b_\top(x)\frac{\sqrt{1+(h'(x))^2}}{\sqrt{h}}+\varphi_n(0)b_\bot(x) \frac{1}{\sqrt{h}}. 
\end{equation}
Using the work done in \cite{bonnetier2}, we are able to provide an approximation of the solution of \eqref{6_eqmatlab}. If $h$ is increasing, we can state the following result using Theorem 1 and Remark 5 in \cite{bonnetier2}:

\begin{theorem}\label{6_th1}
Let $k>0$ and let  $h$ be an increasing function defining a varying waveguide $\Omega$ that satisfies Assumption \ref{6_def:slow} with a variation parameter $\eta>0$. Consider sources $f\in \text{L}^2(\Omega)\cap \tL^\infty_c(\Omega)$, $b:=(b_\bot,b_\top)\in ({\text{H}}^{1/2}(\R))^2\cap (\text{L}^\infty_c(\R)^2)$. Assume that there is a unique locally resonant mode $N\in \N$, associated with a simple resonant point $x^\star_k\in\R$.

There exists $\eta_0>0$ depending only on $h_{\min}$, $h_{\max}$, $\delta(k)$, defined in Assumption \ref{6_def:slow} such that if $\eta\leq \eta_0$, then the problem $(\cH_k)$ admits a unique solution $u_k\in \text{H}^2_{\text{loc}}\big(\Omega)$. Moreover, this solution is approached by $u_k^{\text{app}}$ defined for almost every $(x,y)\in \Omega$ by 
\begin{equation}\label{6_greentot}
u_k^{\text{app}}(x,y):=\sum_{n\in \N} u_{k,n}^{\text{app}}(x) \varphi_n(y), \qquad  u_{k,n}^{\text{app}}(x):=\int_\R G_{n}^{\text{app}}(x,s)g_n(s)\dd s,
\end{equation}
where $g_n$ is defined in \eqref{6_gn}, $\ph_n$ is defined in \eqref{6_phin} and $G_{n}^{\text{app}}$ is given by 
\begin{equation}\label{6_greenfunction}
G_{n}^{\text{app}}(x,s):=
\left\{\begin{aligned}
&\frac{i}{2\sqrt{k_n(s)k_n(x)}}\exp\left(i\left|\int_s^xk_n\right|\right), & \quad\text{ if } n<N,\\
&\frac{1}{2\sqrt{|k_n|(s)|k_n|(x)}}\exp\left(-\left|\int_s^x|k_n|\right|\right), & \quad\text{if } n>N,\\
&\left\{\begin{aligned}
\frac{\pi(\xi(s)\xi(x))^{1/4}}{\sqrt{k_n(s)k_n(x)}}\big(i\Ai+\Bi\big)\circ\xi(s)\Ai\circ\xi(x)& \quad\text{ if } x<s, \\
\frac{\pi(\xi(s)\xi(x))^{1/4}}{\sqrt{k_n(s)k_n(x)}}\big(i\Ai+\Bi\big)\circ\xi(x)\Ai\circ\xi(s)& \quad\text{ if } x>s, \\
\end{aligned}\right.
&\quad\text{ if } n=N.
\end{aligned}\right.\end{equation}
Function $k_n$ is the wavenumber function defined in Definition \ref{6_def:wavenumber} and the function $\xi$ is given by 
\begin{equation}\label{6_eq:xi}
\xi(x):=
\left\{\begin{aligned}
\left(-\frac{3}{2}i\int_x^{x^\star_k}k_N(t)\dd t\right)^{2/3} & \text{ if } x<x^\star_k, \\ -\left(\frac{3}{2}\int_{x^\star_k}^x k_N(t) \dd t \right)^{2/3} & \text{ if } x>x^\star_k.
\end{aligned}\right.\end{equation}
Precisely, given a coordinate $x^{\text{meas}}\in \R$, there exist a control of $|u_{k,n}(x^{\text{meas}})-u_{k,n}^\app(x^{\text{meas}})|$ by~$\eta$ for all $n\in \N$. Particularity, for $n=N$, there exists a constant $C_1>0$ depending only on $h_{\min}$, $h_{\max}$, and $N$ such that 
\begin{equation}
|u_{k,N}(x^{\text{meas}})-u_{k,N}^\app(x^{\text{meas}})|\leq \eta C_1 \delta(k)^{-8}\left(\Vert f\Vert_{\text{L}^2(\Omega)}+\Vert b\Vert_{\left({\text{H}}^{1/2}(\R)\right)^2}\right).
\end{equation}

\end{theorem}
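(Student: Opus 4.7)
The proof is a close adaptation of Theorem 1 and Remark 5 of \cite{bonnetier2}; the only genuinely new point is that the error has to be controlled at a fixed coordinate $x^{\text{meas}}$ rather than on the full waveguide or at infinity. I would organize the argument in three stages.

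\textbf{Stage 1 (modal reduction).} Project the problem $(\cH_k)$ against the moving basis $(\ph_n(x,\cdot))_{n\in\N}$ of Definition \ref{6_def:modes}. Because the profile $h$ depends on $x$, the basis functions depend on $x$ as well, which introduces coupling terms involving $h'$ and $h''$. Under Assumption \ref{6_def:slow} these terms are controlled by $\eta$ and $\eta^2$ respectively. The outcome is a decoupled ODE at leading order,
\begin{equation*}
u_{k,n}''+k_n^2(x)\,u_{k,n}=-g_n+R_n(u_k),
\end{equation*}
with $g_n$ given by \eqref{6_gn} and $\|R_n(u_k)\|_{\tL^2}\lesssim \eta\,(\|f\|_{\tL^2(\Omega)}+\|b\|_{(\tH^{1/2}(\R))^2})$ after summation over $n$.

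\textbf{Stage 2 (approximate Green's functions).} For each decoupled equation I would build the outgoing Green's function $G_n^{\app}$ of \eqref{6_greenfunction}. For $n<N$ (propagative) the wavenumber $k_n^2$ is bounded away from zero by $\delta(k)^2$ and a standard WKB Ansatz produces the oscillatory kernel. For $n>N$ (evanescent) $k_n^2<0$ is again bounded away from zero and the WKB Ansatz yields the exponentially decaying kernel. The delicate case is the resonant mode $n=N$, where $k_N^2$ vanishes simply at the turning point $x^\star_k$ (the assumption $h'(x^\star_k)\ne 0$ is used here). I would apply the Langer change of variable $x\mapsto\xi(x)$ of \eqref{6_eq:xi} to transform the leading equation into the Airy equation $y''-\xi y=0$, so that the outgoing Green's function can be written in closed form using $\Ai$ and $i\Ai+\Bi$. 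Matching asymptotics at $x=x^\star_k$ is automatic thanks to the $(\xi(s)\xi(x))^{1/4}/\sqrt{k_N(s)k_N(x)}$ prefactor, which is smooth across the turning point.

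\textbf{Stage 3 (wellposedness and error at $x^{\text{meas}}$).} Uniqueness and existence of $u_k\in\tH^2_\loc(\Omega)$ follow from the outgoing condition of Definition \ref{6_def:outgoing} combined with a Neumann-series argument: writing $u_k=u_k^{\app}+T_\eta u_k$ where $T_\eta$ collects the perturbative terms from Stage 1 acting through the approximate Green's operator, one shows $\|T_\eta\|\leq C\,\eta\,\delta(k)^{-8}$, so that for $\eta\leq\eta_0(h_{\min},h_{\max},\delta(k))$ the operator $I-T_\eta$ is invertible. Applying this bound at $x=x^{\text{meas}}$ and using a one-dimensional Sobolev embedding $\tH^1_\loc\hookrightarrow\cC^0_\loc$ on each modal component transfers the $\tL^2$ bound on $u_{k,n}-u_{k,n}^{\app}$ to the pointwise bound stated in the theorem. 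The constant $C_1$ depending only on $h_{\min}$, $h_{\max}$ and $N$ arises from the uniform bounds on $\Ai$, $\Bi$ and on the Langer map $\xi$ over compact sets in $x^{\text{meas}}$.

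The main obstacle is Stage 2 for the resonant mode: one must quantify the error made when replacing the true equation by its Langer reduction, tracking the dependence on $\delta(k)$. The high inverse power $\delta(k)^{-8}$ in the error stems from the repeated differentiations of $\xi$ and of the WKB amplitudes needed to estimate the remainder uniformly; all other steps are routine once the Langer estimates are in hand.
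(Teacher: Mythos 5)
The paper does not actually prove Theorem \ref{6_th1}: it is imported verbatim from the prior work, the text stating that the result follows ``using Theorem 1 and Remark 5 in \cite{bonnetier2}'' and that the proofs of the Section 2 material ``can be found in \cite{bonnetier1,bourgeois1}.'' So there is no in-paper argument to compare yours against; the only genuinely new ingredient the authors claim relative to \cite{niclas1} is the change of measurement location from the surface to the section $\{x=x^{\text{meas}}\}$, which is exactly the point your Stage 3 isolates. Your three-stage outline (projection on the moving modal basis with $\cO(\eta)$ coupling, WKB kernels for $n\neq N$ and a Langer/Airy reduction at the simple turning point for $n=N$, then a Neumann-series perturbation argument giving well-posedness and the error bound) is consistent with the strategy of the cited reference and is the standard route for such results, so as a reconstruction of the missing proof it is reasonable.

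Two places where your sketch is more schematic than a proof would need to be. First, in Stage 1 the remainder $R_n(u_k)$ depends on the unknown $u_k$, so the bound $\Vert R_n(u_k)\Vert_{\tL^2}\lesssim \eta(\Vert f\Vert_{\tL^2}+\Vert b\Vert)$ cannot be asserted before the fixed-point argument of Stage 3 closes; the two stages must be run together, with the contraction estimate supplying the a priori control on $u_k$. Second, the exponent in $\delta(k)^{-8}$ is attributed to ``repeated differentiations of $\xi$ and of the WKB amplitudes,'' which is a plausible but unverified guess; since the paper gives no derivation either, this constant would have to be traced through the estimates of \cite{bonnetier2} rather than asserted. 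Neither point is a wrong step, but both are gaps a self-contained proof would have to fill.
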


This result provides an approximation of the measurements of the $N$-th mode for every frequency, and a control of the approximation error. We notice that at locally resonant frequencies, the wavefield strongly depends on the position of $x^\star_k$, which justifies the idea of using it to develop an inverse method to reconstruct the width $h$. In this work, we assume that we have access to the exact data 
\begin{equation}
u_k(x^{\text{meas}},y) \qquad \forall k>0 \quad \forall y\in  (0,h(x^{\text{meas}})) .
\end{equation}
An illustration is provided in Figure \ref{6_direct} with a representation of the wavefield $u_k$ and the section measurements $u_k(x^{\text{meas}},y)$ when $k$ is a locally resonant frequency. From this general measurement, we define the locally resonant modal data as follows.

\begin{defi} For any $N\in\N$ we denote by $K_N^\text{res}$ the interval of frequencies $k$ such that the mode $N$ is locally resonant at frequency $k$:
\begin{equation}
K_N^\text{res}:=\{k>0\ |\ \exists x_k^\star\in\R,\ N\pi=kh(x_k^\star)\}.
\end{equation}
Then, the locally resonant modal data of the mode $N$ is given by the $N$-th mode measured at position $x^{\text{meas}}$ for all frequencies that make it locally resonant:
\begin{equation}
u_{k,N}(x^{\text{meas}}):=\int_0^{h(x^{\text{meas}})}u_{k}(x^{\text{meas}},y)\ph_n(x^{\text{meas}},y)\dd y,\quad \forall k\in K_N^\text{res}.
\end{equation}
\end{defi}

\begin{figure}[H]
\begin{center}
\input{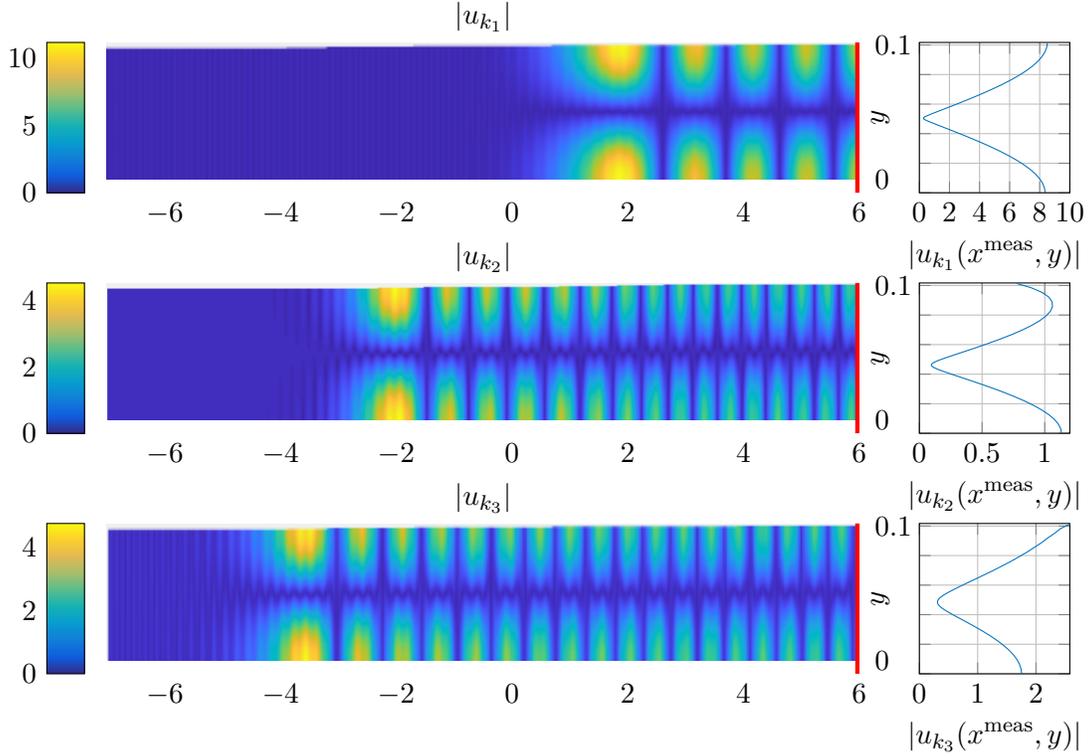}
\end{center}
\caption{\label{6_direct} Illustration of the wavefield measurements in a varying waveguide. The wavefield $u_k$ is solution of \eqref{6_eqmatlab} where $h$ is defined in \eqref{6_h4}, sources are defined in \eqref{6_sourceincr}, $N=1$ and $k_1=31.2$, $k_2=31.5$, $k_3=31.8$ are locally resonant frequencies. Data are generated using the finite element method described in section \ref{6_num}. Left: amplitude of $|u_k|$ in the whole waveguide $\Omega$. Right: measurements of $|u_k|$ on the section $x^{\text{meas}}=6$ represented in red.}
\end{figure}

\section{Discrete inversion from locally resonant modal data}

We now focus on the inverse problem and aim at finding $h$ using the locally resonant modal data. We assume in this section that the waveguide is defined by a fonction $h$ which is slowly varying and strictly increasing in the interior of $\supp(h')$. We also assume that source terms $f$ and/or $b$ defined in \eqref{6_eqmatlab} are supported at the right side of the measurement position $\{x = x^{\text{meas}}\}$. Such a configuration is represented in Figure \ref{6_meas2}.

For a given mode number $N$, we assume that the modal source term $g_N$ defined in \eqref{6_gn} does not cancel everywhere. We also assume that the locally resonant modal data of the mode $N$ given by $u_{k,N}(x^{\text{meas}})$ is fully available for any $k\in K_N^\text{res}$. Assuming that the constant $h_{\min}$ and $h_{\max}$ are given, we can explicit $K_N^\text{res}$ as the open interval 
\begin{equation}\nonumber
K_N^\text{res}=\left(\frac{N\pi}{h_{\max}}, \frac{N\pi}{h_{\min}}\right).
\end{equation}
We detail in Appendix~A how the constants  $h_{\min}$ and $h_{\max}$ can be estimated. 

We do not known if the continuous inverse problem of finding $h$ knowing $u_{k,N}(x^\text{meas})$ for all frequencies $k\in K^\text{res}_N$ is well-posed with a unique solution, and it is still an open question up to our knowledge Therefore, and because we plan on applying the inverse method on experimental data which are discretized, we choose to focus in this paper on the discretized inverse problem of finding information on $h$ knowing $u_{k,N}(x^\text{meas})$ for some discretized frequencies set $\g k$.

We define now an equi-distributed discretization of the frequency interval $k_1,\dots,k_I\in K_N^\text{res}$ such that $k_{i+1}-k_i=\rho>0$ for all $i=1,\dots,I-1$. By extension we also call $k_0:=N\pi/h_{\max}$ and $k_{I+1}:=N\pi/h_{\min}$. As $K_N^\text{res}$ is an open interval, the distance between $\g k:=(k_1,\dots,k_I)$ and the boundary of $K_N^\text{res}$ is non zero and we name $\delta_{\g k}$ the quantity defined by
\begin{equation}\nonumber
\delta_{\g k}:=\min\left(\sqrt{k_{1}^2-k_0^2},\sqrt{k_{I+1}^2-k_{I}^2}\right)>0.
\end{equation}
We associate to every frequency $k_i$ for $i=1,\dots,I$ the local wave number function 
\begin{equation}
k_{i,N}(x):=\sqrt{k_i^2-\frac{N^2\pi^2}{h(x)^2}},
\end{equation} 
which cancels at a single resonant point named $x_i:=x^\star_{k_i}$. The uniqueness of this resonant point is a consequence of the fact that $h$ is monotonous. In order to use Theorem \ref{6_th1}, each locally resonant point $x_i$ must be simple, i.e. $h'(x_i)\neq 0$ which is given by the strict monotonicity of $h$ in the interior of $\supp (h')$. It also implies that  
\begin{equation}
\theta:=\inf_{x\in [x_I,x_1]} \frac{h'(x)}{\eta}>0.
\end{equation}
Most of these quantities are represented in Figure \ref{notations}.

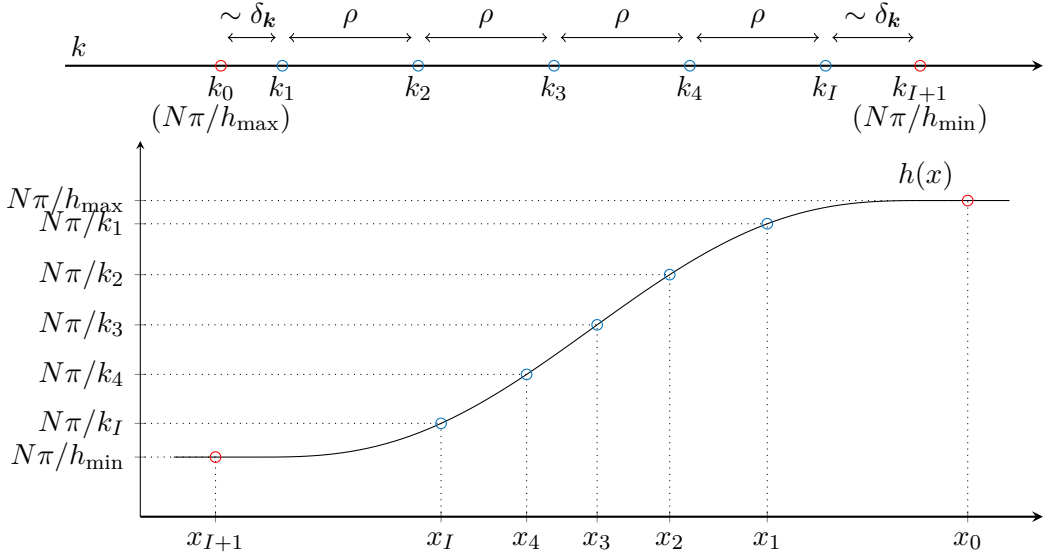
\begin{figure}[h]\center
%
%
\definecolor{mycolor1}{rgb}{0.00000,0.44700,0.74100}%
\begin{tikzpicture}

\begin{axis}[%
width=13cm,
at={(-1cm,0cm)},
height=3cm,
scale only axis,
xmin=30.8,
xmax=32,
axis x line=middle,
   x axis line style={thick},
   enlarge x limits,
   axis y line=left,
   y axis line style={opacity=0},ytick=\empty,
ymin=-2,
ticks=none,
ymax=2,
axis background/.style={fill=white},
legend style={legend cell align=left, align=left, draw=white!15!black}
]
\addplot [color=mycolor1, draw=none, mark=o, mark options={solid, mycolor1}]
  table[row sep=crcr]{%
31	0\\
31.2	0\\
31.4	0\\
31.6	0\\
31.8	0\\
};

\addplot [color=red, draw=none, mark=o, mark options={solid, red}]
  table[row sep=crcr]{%
30.9095052026576	0\\
};

\addplot [color=red, draw=none, mark=o, mark options={solid, red}]
  table[row sep=crcr]{%
31.9392186949968	0\\
};

\draw (30.7,0) node[above]{$k$}; 
\draw[<->] (30.9195052026576,0.5)--(30.99,0.5);
\draw[<->] (31.01,0.5)-- (31.19,0.5);
\draw[<->](31.21,0.5)--(31.39,0.5); 
\draw[<->] (31.41,0.5) --(31.59,0.5); 
\draw[<->] (31.61,0.5) --(31.79,0.5); 
\draw[<->] (31.81,0.5)--(31.9292186949968,0.5); 
\draw (30.9095052026576,0) node[below]{$k_0$};
\draw (30.9095052026576,-0.5) node[below]{$(N\pi/h_{\max})$};
\draw (31,0) node[below]{$k_1$}; 
\draw (31.2,0) node[below]{$k_2$}; 
\draw (31.4,0) node[below]{$k_3$}; 
\draw (31.6,0) node[below]{$k_4$}; 
\draw (31.8,0) node[below]{$k_I$}; 
\draw (30.95,0.5) node[above]{$\sim\delta_{\g k}$};
\draw (31.1,0.5) node[above]{$\rho$}; 
\draw (31.7,0.5) node[above]{$\rho$}; 
\draw (31.3,0.5) node[above]{$\rho$}; 
\draw (31.5,0.5) node[above]{$\rho$}; 
\draw (31.87,0.5) node[above]{$\sim\delta_{\g k}$};
\draw (31.9392186949968,0) node[below]{$k_{I+1}$}; 
\draw (31.9392186949968,-0.5) node[below]{$(N\pi/h_{\min})$}; 

\end{axis}

\begin{axis}[%
width=12cm,
height=5cm,
at={(0cm,-4.5cm)},
scale only axis,
x axis line style={thick},
axis x line=bottom,
axis y line=left,
ytick=\empty,
xtick = {2.10185217615068,0.934592494264073,0.0660555654653432,-0.777866514887448,-1.80136151638459,-4.5,4.5},
xticklabels = {$x_1$,$x_2$,$x_3$,$x_4$,$x_I$, $x_{I+1}$,$x_0$},
ytick={0.101341698502897,0.100692072230442,0.100050721451904,0.0994174890376517,
0.0987922218109998,0.1016384, 0.0983616},
yticklabels={$N\pi/k_1$,$N\pi/k_2$, $N\pi/k_3$, $N\pi/k_4$, $N\pi/k_I$, $N\pi/h_{\max}$,$N\pi/h_{\min}$},
enlarge x limits,
enlarge y limits,
xmin=-4.5,
xmax=4.5,
ymin=0.098,
ymax=0.102,
axis background/.style={fill=white},
legend style={legend cell align=left, align=left, draw=white!15!black}
]
\addplot [color=black]
  table[row sep=crcr]{%
-5	0.0983616\\
-4.8989898989899	0.0983616\\
-4.7979797979798	0.0983616\\
-4.6969696969697	0.0983616\\
-4.5959595959596	0.0983616\\
-4.49494949494949	0.0983616\\
-4.39393939393939	0.0983616\\
-4.29292929292929	0.0983616\\
-4.19191919191919	0.0983616\\
-4.09090909090909	0.0983616\\
-3.98989898989899	0.0983616000658342\\
-3.88888888888889	0.0983616859726668\\
-3.78787878787879	0.0983621868093927\\
-3.68686868686869	0.098363451423826\\
-3.58585858585859	0.0983658002723163\\
-3.48484848484848	0.0983695261768548\\
-3.38383838383838	0.0983748950821799\\
-3.28282828282828	0.0983821468128828\\
-3.18181818181818	0.0983914958305133\\
-3.08080808080808	0.0984031319906857\\
-2.97979797979798	0.0984172213001842\\
-2.87878787878788	0.0984339066740687\\
-2.77777777777778	0.0984533086927806\\
-2.67676767676768	0.0984755263592485\\
-2.57575757575758	0.0985006378559939\\
-2.47474747474747	0.0985287013022369\\
-2.37373737373737	0.0985597555110019\\
-2.27272727272727	0.0985938207462232\\
-2.17171717171717	0.0986308994798511\\
-2.07070707070707	0.0986709771489571\\
-1.96969696969697	0.0987140229128401\\
-1.86868686868687	0.0987599904101315\\
-1.76767676767677	0.0988088185159018\\
-1.66666666666667	0.0988604320987654\\
-1.56565656565657	0.098914742777987\\
-1.46464646464646	0.0989716496805867\\
-1.36363636363636	0.0990310401984465\\
-1.26262626262626	0.099092790745415\\
-1.16161616161616	0.0991567675144143\\
-1.06060606060606	0.0992228272345445\\
-0.95959595959596	0.0992908179281903\\
-0.858585858585859	0.0993605796681265\\
-0.757575757575758	0.0994319453346233\\
-0.656565656565657	0.0995047413725527\\
-0.555555555555556	0.0995787885484936\\
-0.454545454545455	0.0996539027078379\\
-0.353535353535354	0.0997298955318959\\
-0.252525252525253	0.0998065752950025\\
-0.151515151515152	0.0998837476216222\\
-0.0505050505050505	0.0999612162434556\\
0.0505050505050505	0.100038783756544\\
0.151515151515152	0.100116252378378\\
0.252525252525253	0.100193424704998\\
0.353535353535354	0.100270104468104\\
0.454545454545455	0.100346097292162\\
0.555555555555556	0.100421211451506\\
0.656565656565657	0.100495258627447\\
0.757575757575758	0.100568054665377\\
0.858585858585859	0.100639420331874\\
0.95959595959596	0.10070918207181\\
1.06060606060606	0.100777172765456\\
1.16161616161616	0.100843232485586\\
1.26262626262626	0.100907209254585\\
1.36363636363636	0.100968959801554\\
1.46464646464646	0.101028350319413\\
1.56565656565657	0.101085257222013\\
1.66666666666667	0.101139567901235\\
1.76767676767677	0.101191181484098\\
1.86868686868687	0.101240009589868\\
1.96969696969697	0.10128597708716\\
2.07070707070707	0.101329022851043\\
2.17171717171717	0.101369100520149\\
2.27272727272727	0.101406179253777\\
2.37373737373737	0.101440244488998\\
2.47474747474747	0.101471298697763\\
2.57575757575758	0.101499362144006\\
2.67676767676768	0.101524473640751\\
2.77777777777778	0.101546691307219\\
2.87878787878788	0.101566093325931\\
2.97979797979798	0.101582778699816\\
3.08080808080808	0.101596868009314\\
3.18181818181818	0.101608504169487\\
3.28282828282828	0.101617853187117\\
3.38383838383838	0.10162510491782\\
3.48484848484848	0.101630473823145\\
3.58585858585859	0.101634199727684\\
3.68686868686869	0.101636548576174\\
3.78787878787879	0.101637813190607\\
3.88888888888889	0.101638314027333\\
3.98989898989899	0.101638399934166\\
4.09090909090909	0.1016384\\
4.19191919191919	0.1016384\\
4.29292929292929	0.1016384\\
4.39393939393939	0.1016384\\
4.49494949494949	0.1016384\\
4.5959595959596	0.1016384\\
4.6969696969697	0.1016384\\
4.7979797979798	0.1016384\\
4.8989898989899	0.1016384\\
5	0.1016384\\
};

\addplot [color=black,dotted]
  table[row sep=crcr]{%
2.10185217615068	0.097\\
2.10185217615068	0.101341698502897\\
};

\addplot [color=black,dotted]
  table[row sep=crcr]{%
0.934592494264073	0.097\\
0.934592494264073	0.100692072230442\\
};

\addplot [color=black,dotted]
  table[row sep=crcr]{%
0.0660555654653432	0.097\\
0.0660555654653432	0.100050721451904\\
};

\addplot [color=black,dotted]
  table[row sep=crcr]{%
-0.777866514887448	0.097\\
-0.777866514887448	0.0994174890376517\\
};

\addplot [color=black,dotted]
  table[row sep=crcr]{%
-1.80136151638459	0.097\\
-1.80136151638459	0.0987922218109998\\
};

\addplot [color=black,dotted]
  table[row sep=crcr]{%
4.5	0.097\\
4.5	0.1016384\\
};

\addplot [color=black,dotted]
  table[row sep=crcr]{%
-4.5	0.097\\
-4.5	0.0983616\\
};

\addplot [color=black,dotted]
  table[row sep=crcr]{%
-5.5	0.0983616\\
-4.5	0.0983616\\
};

\addplot [color=black,dotted]
  table[row sep=crcr]{%
-5.5	0.1016384\\
4.5	0.1016384\\
};

\addplot [color=black,dotted]
  table[row sep=crcr]{%
-5.5	0.101341698502897\\
2.10185217615068	0.101341698502897\\
};

\addplot [color=black,dotted]
  table[row sep=crcr]{%
-5.5	0.100692072230442\\
0.934592494264073	0.100692072230442\\
};

\addplot [color=black,dotted]
  table[row sep=crcr]{%
-5.5	0.100050721451904\\
0.0660555654653432	0.100050721451904\\
};

\addplot [color=black,dotted]
  table[row sep=crcr]{%
-5.5	0.0994174890376517\\
-0.777866514887448	0.0994174890376517\\
};

\addplot [color=black,dotted]
  table[row sep=crcr]{%
-5.5	0.0987922218109998\\
-1.80136151638459	0.0987922218109998\\
};

\addplot [color=black, draw=none, mark=o, mark options={solid, mycolor1}]
  table[row sep=crcr]{%
2.10185217615068	0.101341698502897\\
};

\addplot [color=black, draw=none, mark=o, mark options={solid, mycolor1}]
  table[row sep=crcr]{%
0.934592494264073	0.100692072230442\\
};

\addplot [color=black, draw=none, mark=o, mark options={solid, mycolor1}]
  table[row sep=crcr]{%
0.0660555654653432	0.100050721451904\\
};

\addplot [color=black, draw=none, mark=o, mark options={solid, mycolor1}]
  table[row sep=crcr]{%
-0.777866514887448	0.0994174890376517\\
};

\addplot [color=black, draw=none, mark=o, mark options={solid, mycolor1}]
  table[row sep=crcr]{%
-1.80136151638459	0.0987922218109998\\
};

\addplot [color=black, draw=none, mark=o, mark options={solid, red}]
  table[row sep=crcr]{%
-4.5	0.0983616\\
};

\addplot [color=black, draw=none, mark=o, mark options={solid, red}]
  table[row sep=crcr]{%
4.5	0.1016384\\
};

\draw(4,0.1016384) node[above]{$h(x)$}; 
\end{axis}

\end{tikzpicture}%
\caption{\label{notations}Illustration of the main notations used in Section 3. In red, the external frequencies and the support of $h'$ found using Appendix A. In blue, the set of frequencies $\g k$ containing $I=5$ frequencies and the corresponding locally resonant points $\g x$. }
\end{figure}

The proposed inversion method consists in recovering the locally resonant points $\g x:=(x_1,\dots,x_I)$ for each frequency in $\g k:=(k_1,\dots,k_I)$. The shape function $h$ will be then approached by a continuous piecewise linear function using the formula $h(x_i)=N\pi/k_i$. Hence, the discrete inverse problem reads 

\begin{equation}
\text{Find } \quad \g x:=(x_1,\dots, x_n) \quad \text{ from }\quad \g d^\text{ex}:=(u_{k_1,N}(x^{\text{meas}}),\dots,u_{k_I,N}(x^{\text{meas}})).
\end{equation}
The analysis of this non-linear inverse problem is decomposed in four steps:
\begin{itemize}
\item \textbf{Step 1: (Data approximation)} In Section 3.1, we prove that 
\begin{equation}
u_{k,N}(x^{\text{meas}})=C\, \Phi\circ\zeta(k)+\cO(\eta),
\end{equation}
where $C$ is a known constant,
\begin{equation}
\Phi(x):=\sin(x+\pi/4)\exp(ix+i\pi/4)\quad\text{and} \quad \zeta(k):=\int_{x^\star_{k}}^{x^{\text{meas}}} \sqrt{k^2-\frac{N^2\pi^2}{h(x)^2}}\dd x.
\end{equation}
\item \textbf{Step 2 : (Inversion of $\Phi$)}   In Section 3.2, we inverse $\Phi$ to obtain $\zeta(k)$. The function $\Phi$ admits a left inverse function modulus $\pi$. We prove that if the frequency discretization step $\rho$ is small enough, the global left inverse can be found and applied to the data. It leads to an approximation of  $\zeta(k)$ up to an error in $\cO(\eta)$. 

\item \textbf{Step 3: (Discretization of the integral)} In Section 3.3, we use the numerical integration elementary formula

\begin{equation}\nonumber
\begin{aligned}
\int_{x_{j+1}}^{x_j} k_{i,N}(x)\dd x &\approx (x_j-x_{j+1})\left(\frac14k_{i,N}(x_{j+1})+\frac34k_{i,N}(x_{j})\right)\\
&\approx (x_j-x_{j+1})\left(\frac14\sqrt{k_i^2-k_{j+1}^2}+\frac34\sqrt{k_i^2-k_{j}^2}\right)\\
\end{aligned}
\end{equation}
to approach $\zeta(k_i)$. It allows to run a stripping method to recover the differences $x_j-x_{j+1}$ from the approached values of $\zeta(k_i)$, which provides an approximation of $\g x$. As we will see, this stripping method conduces to triangular linear system.

\item \textbf{Step 4: (Reconstuction of $h$)} In Section 3.4, we use the approximation $\g x$ to provide a reconstruction of $h$. We quantify the reconstruction error of this inverse method as well as its stability. 
\end{itemize}

\subsection{Data approximation}

First, we want to approach the locally resonant modal data  $u_{k,N}(x^{\text{meas}})$. Theorem \ref{6_th1} provides a first approximation $u_{k,N}^\app(x^{\text{meas}})$ of these measurements. However, this expression is hardly usable to do inverse problems. Using the fact that the measurement section $\{x=x^{\text{meas}}\}$ is far from the defect, we can simplify this approximation:

\begin{prop}\label{6_meassec}
Let $k\in \g k$ and assume that $\Omega$ satisfies Assumption \ref{6_def:slow}. Let $x^{\text{meas}}\in \R$, there exist $\eta_0,A>0$ such that if $\eta<\eta_0$ and $x^{\text{meas}}-x_{0}>A$, then there exists $C_2>0$ depending on $\delta_{\g k}$, $h_{\min}$ and $h_{\max}$ such that for every $k\in \delta_{\g k}$, 
\begin{equation}
u_{k,N}(x^{\text{meas}})=q(k)\,\Phi\circ\zeta(k)+\cO(\eta), \qquad \left|u_{k,N}(x^{\text{meas}})-q(k)\,\Phi\circ\zeta(k)\right|\leq \eta C_2,
\end{equation}
where $q(k):=\int_\R g_N(z) e^{ik_{N}(x^{\text{meas}})(z-x^{\text{meas}})}\dd z/k_N(x^{\text{meas}})$ is a constant depending only on the source~terms.
\end{prop}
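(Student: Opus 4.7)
The plan is to chain three approximations: (i) replace the exact wavefield by its Airy-based approximation from Theorem~\ref{6_th1}, (ii) replace the Airy functions appearing in $G_N^{\app}$ by their large-argument asymptotic expansions, valid in the flat zone to the right of the defect, and (iii) use the fact that on $\supp(g_N)$ the width is constant to collapse the $s$-integral into $q(k)$.

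First I apply Theorem~\ref{6_th1}. Since $k\in\g k\subset K_N^{\text{res}}$, we have $\delta(k)\geq c(\delta_{\g k},h_{\min},h_{\max})>0$, so the stated bound yields $u_{k,N}(x^{\text{meas}})=u_{k,N}^{\app}(x^{\text{meas}})+\cO(\eta)$ with a constant absorbing the fixed source norms. I then write
\begin{equation}
u_{k,N}^{\app}(x^{\text{meas}})=\int_\R G_N^{\app}(x^{\text{meas}},s)\,g_N(s)\,\dd s
\end{equation}
using the $n=N$ branch of \eqref{6_greenfunction}. By the geometric hypotheses, $\supp(g_N)\subset\{s>x^{\text{meas}}\}$, so only the case $x^{\text{meas}}<s$ appears. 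Moreover $x^{\text{meas}}-x_0>A$ forces both $x^{\text{meas}}$ and every $s\in\supp(g_N)$ to lie strictly to the right of $\supp(h')$ and of the unique resonant point $x_k^\star$, so $\xi(x^{\text{meas}}),\xi(s)\in(-\infty,0)$, and $h\equiv h_{\max}$ on $\supp(g_N)$.

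Second, I invoke the classical asymptotics
\begin{equation}
\Ai(-y)=\frac{\sin(\tfrac23 y^{3/2}+\pi/4)}{\sqrt\pi\, y^{1/4}}+O(y^{-7/4}),\qquad \Bi(-y)=\frac{\cos(\tfrac23 y^{3/2}+\pi/4)}{\sqrt\pi\, y^{1/4}}+O(y^{-7/4}),
\end{equation}
so that $(i\Ai+\Bi)(-y)=\frac{1}{\sqrt\pi y^{1/4}}e^{i(\frac23 y^{3/2}+\pi/4)}+O(y^{-7/4})$. By \eqref{6_eq:xi}, $\tfrac23|\xi(x)|^{3/2}=\int_{x_k^\star}^x k_N$ for $x>x_k^\star$, hence $\tfrac23|\xi(x^{\text{meas}})|^{3/2}=\zeta(k)$, and on $\supp(g_N)$ the constancy of $k_N$ gives $\tfrac23|\xi(s)|^{3/2}=\zeta(k)+k_N(x^{\text{meas}})(s-x^{\text{meas}})$. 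Since $\xi(x^{\text{meas}}),\xi(s)<0$, the prefactor $\pi(\xi(s)\xi(x^{\text{meas}}))^{1/4}$ in \eqref{6_greenfunction} equals $\pi(|\xi(s)||\xi(x^{\text{meas}})|)^{1/4}$, which cancels the $|\xi|^{-1/4}$ factors from the two Airy asymptotics. After this cancellation I obtain
\begin{equation}
G_N^{\app}(x^{\text{meas}},s)=\frac{\sin(\zeta(k)+\pi/4)\,e^{i(\zeta(k)+\pi/4)}}{k_N(x^{\text{meas}})}\,e^{ik_N(x^{\text{meas}})(s-x^{\text{meas}})}+R(x^{\text{meas}},s),
\end{equation}
where the first factor is exactly $\Phi(\zeta(k))$ and $R$ gathers the Airy remainders. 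Integrating against $g_N$ produces the leading term $q(k)\Phi(\zeta(k))$, with the factor $1/k_N(x^{\text{meas}})$ coming out of the integral because $k_N$ is constant on $\supp(g_N)$.

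The main technical obstacle is turning the three error sources (Theorem~\ref{6_th1}, Airy remainders, constancy of $k_N$ on $\supp(g_N)$) into a uniform bound of the shape $\eta\,C_2$. The last source vanishes identically since $x^{\text{meas}}$ lies past $\supp(h')$ by the hypothesis $x^{\text{meas}}-x_0>A$. The first is already linear in $\eta$. The Airy remainder has size $O(|\xi(x^{\text{meas}})|^{-7/4}+\sup_{s\in\supp(g_N)}|\xi(s)|^{-7/4})$, and the lower bound $|\xi(x^{\text{meas}})|^{3/2}\geq \tfrac32\delta_{\g k}A$ makes it $O(A^{-7/6})$; fixing $A$ large in terms of $\eta_0$, $\delta_{\g k}$, $h_{\min}$, $h_{\max}$ then lets this fixed residual be absorbed into a constant $C_2$ together with the $\cO(\eta)$ contribution from Theorem~\ref{6_th1}, yielding the claimed estimate uniformly in $k\in\g k$.
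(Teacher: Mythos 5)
Your proof is correct and follows essentially the same route as the paper's: apply Theorem \ref{6_th1}, keep only the $x^{\text{meas}}<s$ branch of $G_N^{\app}$, insert the large-argument Airy asymptotics, and use the flatness of $h$ to the right of $x_0$ to identify $\tfrac{2}{3}|\xi|^{3/2}$ with $\zeta(k)$ and collapse the $s$-integral into $q(k)$. The only caveat — shared with the paper, which merely notes that $-\xi(x^{\text{meas}})\to\infty$ — is that the Airy remainder is $O(A^{-7/6})$ rather than $O(\eta)$, so the stated bound $\eta C_2$ implicitly requires $A$ to be chosen in terms of $\eta$.
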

\begin{proof}
Using Theorem \ref{6_th1}, we know that for every $k\in K$, 
\begin{equation}\nonumber
|u_{k,N}(x^{\text{meas}})-u_{k,N}^{\text{app}}(x^{\text{meas}})|\leq \eta C_1 \delta^{-8}(K) \left(\Vert f \Vert_{\text{L}^2(\R)} +\Vert b\Vert_{\text{H}^{1/2}(\R)}\right),
\end{equation}
where 
\begin{equation}\nonumber
u_{k,N}^{\app}(x^{\text{meas}})=\frac{\pi(-\xi(x^{\text{meas}}))^{1/4}\Ai(\xi(x^{\text{meas}}))}{k_{N}(x^{\text{meas}})} \int_\R  g_N(z) (-\xi(z))^{1/4}(i\Ai+\Bi)(\xi(z))\dd z . 
\end{equation}
Using approximations of Airy functions provided in \cite{olver3} and \cite[Chap. 9.7]{olver4}, we see that there exists a constant $c_1>0$ such that for all $x>0$,
\begin{equation}\nonumber
\left|\sqrt{\pi}x^{1/4}\Ai(-x)+\sin\left(\frac{2}{3}x^{3/2}+\frac{\pi}{4}\right)\right|\leq \frac{c_1}{x^{5/4}}, 
\end{equation}
\begin{equation}\nonumber
\left|\sqrt{\pi}x^{1/4}\Bi(-x)+\cos\left(\frac{2}{3}x^{3/2}+\frac{\pi}{4}\right)\right|\leq \frac{c_1}{x^{5/4}}.
\end{equation}
and so
\begin{equation}\nonumber 
\left|\sqrt{\pi}x^{1/4}(i\Ai+\Bi)(-x)+\exp\left(i\frac{2}{3}x^{3/2}+i\frac{\pi}{4}\right)\right|\leq \frac{2c_1}{x^{5/4}},
\end{equation}
Replacing $x$ by $-\xi(x)$ or $-\xi(x^{\text{meas}})$ leads to 
\begin{equation}\nonumber
\left|u_{k,N}^\app(x^{\text{meas}})-q(k)\,\Phi\circ\zeta(k)\right|\leq \delta^{-1}\Vert g_N\Vert_{\text{L}^1(\R)}\frac{3c_1+2c_1^2}{(-\xi(x^{\text{meas}}))^{5/4}}.
\end{equation}
We conclude the proof by noticing that 
\begin{equation}\nonumber
-\xi(x^{\text{meas}})=\left(\frac{3}{2}\int_{x^\star_k}^{x^{\text{meas}}}k_N(t)\dd t\right)^{2/3} \geq (x^{\text{meas}}-x_0)^{2/3} k_N(x^{\text{meas}})^{2/3}\underset{x^{\text{meas}}\rightarrow +\infty}{\longrightarrow} +\infty.
\end{equation}
\end{proof}

Since source terms are assumed to be chosen, $q$ is an explicit non vanishing quantity and Proposition \ref{6_meassec} can be rewritten as follows: 
\begin{coro}\label{6_coro}
Let $k\in \g k$, and assume that $\Omega$ satisfies Assumption \ref{6_def:slow}. Let $x^{\text{meas}}\in \R$, there exist $\eta_0,A>0$ such that if $\eta<\eta_0$ and $x^{\text{meas}}-x_{0}>A$, then there exists $C_2>0$ depending on $\delta_{\g k}$, $h_{\min}$ and $h_{\max}$ such that for every $k\in \g k$,
\begin{equation}
\Phi\circ\zeta(k)=\frac{1}{q(k)} u_{k,N}(x^{\text{meas}}) +\cO(\eta), \qquad \left|\Phi\circ\zeta(k)-\frac{1}{q(k)} u_{k,N}(x^{\text{meas}})\right|\leq \eta c_2C_2,
\end{equation}
where $c_2:=\max_{k\in \g k} 1/q(k)$.
\end{coro}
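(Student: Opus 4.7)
The plan is to derive this corollary as a direct algebraic consequence of Proposition \ref{6_meassec}, whose conclusion already contains the essential approximation $|u_{k,N}(x^{\text{meas}})-q(k)\,\Phi\circ\zeta(k)|\leq \eta C_2$. The main task is to move the factor $q(k)$ to the other side in a way that yields a uniform bound over the discrete frequency set $\g k$.

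First, I would invoke Proposition \ref{6_meassec} with the same constants $\eta_0$ and $A$: under the hypotheses $\eta<\eta_0$ and $x^{\text{meas}}-x_0>A$, we have
\begin{equation}
\left|u_{k,N}(x^{\text{meas}})-q(k)\,\Phi\circ\zeta(k)\right|\leq \eta C_2
\end{equation}
for every $k\in \g k$, with $C_2>0$ depending only on $\delta_{\g k}$, $h_{\min}$ and $h_{\max}$. The next step is simply to divide both sides by $|q(k)|$. Since the source terms are fixed and the mode $N$ was assumed in the setup to give a non-vanishing modal source $g_N$, the denominator $q(k)=\int_\R g_N(z) e^{ik_N(x^{\text{meas}})(z-x^{\text{meas}})}\dd z/k_N(x^{\text{meas}})$ is a nonzero quantity for each $k\in \g k$, and $k_N(x^{\text{meas}})\geq \delta_{\g k}>0$ guarantees that $1/q(k)$ stays finite.

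Then, because $\g k=(k_1,\dots,k_I)$ is a finite set, the quantity $c_2:=\max_{k\in\g k}1/|q(k)|$ is well-defined and finite. Multiplying the bound $|u_{k,N}(x^{\text{meas}})/q(k)-\Phi\circ\zeta(k)|\leq \eta C_2/|q(k)|$ through by $|q(k)|^{-1}$ and using this maximum yields
\begin{equation}
\left|\Phi\circ\zeta(k)-\frac{1}{q(k)}u_{k,N}(x^{\text{meas}})\right|\leq \eta c_2 C_2,
\end{equation}
which is exactly the claimed inequality, and the $\cO(\eta)$ asymptotic follows at once.

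There is no real obstacle here: the only subtle point is ensuring $q(k)\neq 0$ on the discrete set $\g k$ so that $c_2$ is finite; this is guaranteed by the running assumption of Section 3 that $g_N$ does not vanish identically, combined with the discreteness of $\g k$. Everything else is algebraic rearrangement of Proposition \ref{6_meassec}.
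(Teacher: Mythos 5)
Your proposal is correct and follows essentially the same route as the paper, which derives the corollary in one line by observing that $q(k)$ is an explicit non-vanishing quantity (since the sources are chosen) and dividing the bound of Proposition \ref{6_meassec} by $q(k)$, with $c_2=\max_{k\in \g k}1/q(k)$ finite because $\g k$ is a finite set. Your additional remark that non-vanishing of $q$ must be checked on the discrete set $\g k$ is a fair point, but it is treated as an assumption on the sources in the paper just as in your argument.
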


\subsection{Left inverse of the function $\Phi$}

Using the previous Corollary, we now have a good approximation of $\Phi\circ\zeta(k)$ for all frequencies $k\in \g k$. To get access to $\zeta(k)$, we need to find the left inverse function of $\Phi$. Since $\Phi$ is $\pi$-periodic, we can only provide a partial left inverse function $\Phi^{-1}$  modulus $\pi$ satisfying $\Phi^{-1}(\Phi(\theta))=\theta \,\,\text{mod}(\pi)$,~with 

\begin{equation}
 \Phi^{-1}(z):=\left\{\begin{array}{cl} \arcsin(|z|) & \text{ if } |z|<1/2 \text{ and } \text{Real}(z)\geq 0,\\
\pi-\arcsin(|z|) & \text{ if } |z|<1/2 \text{ and } \text{Real}(z)<0,\\
\arccos\left(\text{Real}(z)/|z|\right) & \text{ if } |z|\geq 1/2.\end{array}\right. 
\end{equation}

We represent in Figure \ref{6_postpross} the values of $\Phi^{-1}(u_{k,N}(x^{\text{meas}})/q(k))
 \text{ mod}(\pi)$ for different sets of frequency. We can see that $\rho$, the discretization step of $\g k$, needs to be sufficiently small if we expect to get rid of the modulus $\pi$. Using the fact that $\zeta$ is increasing and assuming that $\rho$ is small enough, the following proposition proves that we can get rid of the modulus $\pi$ up to a constant. We set
\begin{equation}
\ell=\left\lfloor\frac{\zeta(k_1)}{\pi}\right\rfloor,\qquad 
t_i^{\app}=\zeta(k_i)-\ell \pi, \quad 1\leq i \leq m.
\end{equation}

\begin{figure}[h]
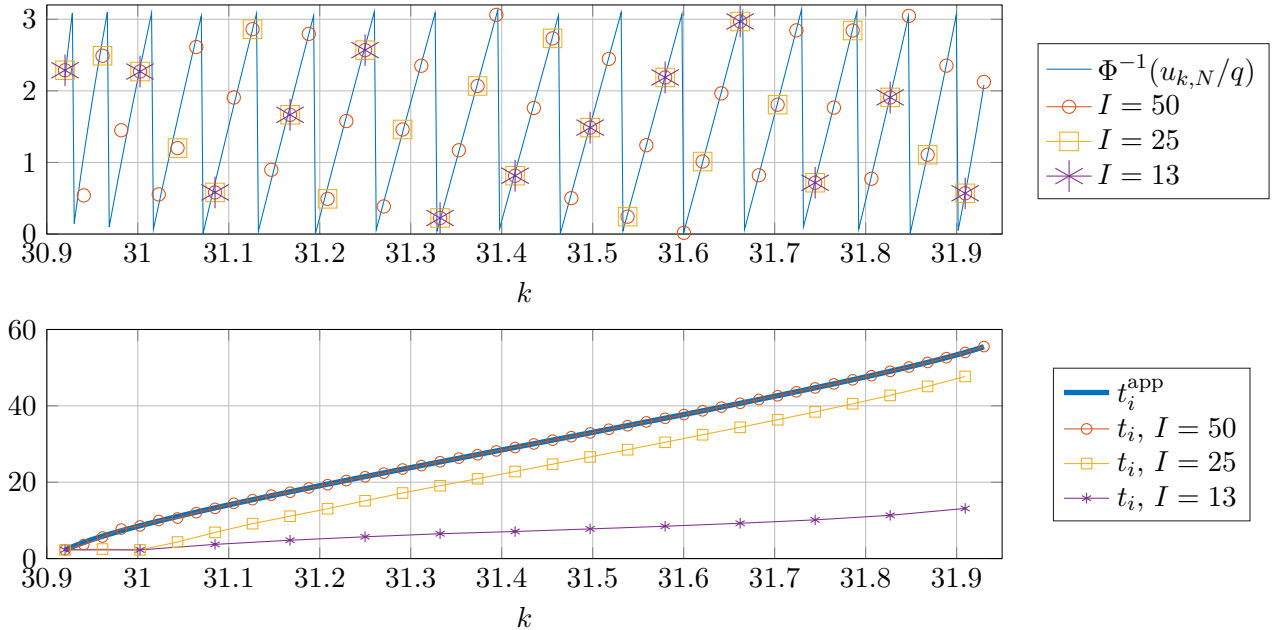

\begin{center}
\begin{tikzpicture}
\input{postpross1}

\input{postpross2}
\end{tikzpicture}
\end{center}
\caption{\label{6_postpross} Computation of the left inverse of $\Phi$ for a set of frequencies $\g k=\{30.92:31.93:I\}$, $x^{\text{meas}}=6$, $h$ defined in \eqref{6_h1} and source defined in \eqref{6_sourceincr}. On the top, representation of $\Phi^{-1}(v_k^\app)$ and $\Phi^{-1}(v_k)$ modulus~$\pi$. On the bottom, representation of $t_i^\app$ and $t_i$ after getting rid of the modulus~$\pi$. Here, $\ell^\app=0$. }
\end{figure}

\begin{prop} \label{6_ti}
Assume that $\Omega$ satisfies Assumptions \ref{6_def:slow}. There exist $\eta_0,A>0$ such that if $\eta<\eta_0$, $x^{\text{meas}}-x_{0}>A$, there exist $\rho_0>0$ depending on $x^{\text{meas}}$ and $\eta$ such that if $\rho<\rho_0$, then there exist a unique sequence $(t_i)_{1\leq i \leq m}$ such that 
\begin{equation}
\forall 1\leq i \leq m \quad t_i=\Phi^{-1}\left(\frac{u_{k,N}(x^{\text{meas}})}{q(k)}\right) \,\, \text{mod}(\pi), \qquad  |t_{i+1}-t_i|< \pi/2.
\end{equation}
Moreover, there exists a constant $C_3>0$ depending on $\delta_{\g k}$, $\theta$, $h_{\min}$ and $h_{\max}$ such that 
\begin{equation}
\forall	1\leq i \leq m \quad |t_i-t_i^{\app}|\leq C_3\eta. 
\end{equation}
\end{prop}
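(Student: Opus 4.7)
The plan is to use Corollary \ref{6_coro} to replace $\Phi(\zeta(k_i))$ by the measured data up to an $\cO(\eta)$ error, invert $\Phi$ using a uniform Lipschitz estimate on $\Phi^{-1}$, and lift the resulting modulo-$\pi$ ambiguity using the monotonicity of $\zeta$ together with the smallness of $\rho$.

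I would first establish the Lipschitz estimate. Writing $\alpha = x + \pi/4$, one has $\Phi(x) = \sin\alpha\, e^{i\alpha}$, so $|\Phi'|\equiv 1$ and $\Phi$ parametrizes the circle $\{|z-i/2|=1/2\}$ at unit speed with period $\pi$. The two-branch definition of $\Phi^{-1}$ is designed to stay Lipschitz on each piece: on $\{|z|\le 1/2\}$ one uses $\arcsin(|z|)$, whose derivative is bounded by $2/\sqrt{3}$ on $[0,1/2]$, and on $\{|z|\ge 1/2\}$ one uses $\arccos(\mathrm{Re}(z)/|z|)$, well-defined and Lipschitz since along the circle $|\mathrm{Re}(z)|/|z|\le \sqrt{3}/2$ stays bounded away from $1$. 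A standard perturbation argument then extends $\Phi^{-1}$, with a uniform Lipschitz constant $L$, to a fixed tubular neighborhood $V$ of the circle, valued in $\R/\pi\Z$.

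Combined with Corollary \ref{6_coro}, for $\eta_0$ small enough that the data $u_{k_i,N}(x^{\text{meas}})/q(k_i)$ lies in $V$, this yields $\Phi^{-1}(u_{k_i,N}(x^{\text{meas}})/q(k_i)) \equiv \zeta(k_i) \pmod{\pi}$ with error at most $L c_2 C_2\, \eta$. To lift the ambiguity, observe that $k\mapsto \zeta(k)$ is smooth and strictly increasing; differentiating under the integral sign and using Assumption \ref{6_def:slow} provides $\zeta(k_{i+1})-\zeta(k_i) \le M\rho$ for a constant $M$ depending only on $x^{\text{meas}}$, $\delta_{\g k}$, $\theta$, $h_{\min}$, $h_{\max}$. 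Choose $\eta_0, \rho_0$ small enough that $L c_2 C_2 \eta_0 + M\rho_0 < \pi/4$. Set $t_1 := \Phi^{-1}(u_{k_1,N}(x^{\text{meas}})/q(k_1))$ and recursively define $t_{i+1}$ as the unique representative of $\Phi^{-1}(u_{k_{i+1},N}(x^{\text{meas}})/q(k_{i+1}))$ modulo $\pi$ with $|t_{i+1}-t_i|<\pi/2$: existence follows from the triangle inequality, and uniqueness because any other lift would be at distance at least $\pi - \pi/2 = \pi/2$. With $\ell := \lfloor \zeta(k_1)/\pi\rfloor$ matching the initial choice of representative for $t_1$, the stability $|t_i-t_i^{\mathrm{app}}|\le L c_2 C_2\,\eta =: C_3\,\eta$ then propagates to every $i$ from the previous display.

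The main obstacle is the uniform Lipschitz bound on $\Phi^{-1}$: one must verify that the two branches of its piecewise definition match modulo $\pi$ at the boundary $|z|=1/2$ and extend consistently to a full tubular neighborhood of the circle with a constant depending only on the parameters claimed in the statement.
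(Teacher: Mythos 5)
Your overall strategy is the same as the paper's: approximate the data by $\Phi\circ\zeta(k)$ via Corollary \ref{6_coro}, invert $\Phi$ with a Lipschitz bound (your circle-geometry discussion of $\Phi$ and the two branches of $\Phi^{-1}$ is correct and in fact more careful than the paper's crude constant $1/\sqrt{1-(3/4)^2}$), and lift the modulo-$\pi$ ambiguity by showing consecutive increments are smaller than $\pi/4$ so that the total perturbation stays below $\pi/2$. The lifting/uniqueness argument and the propagation of the estimate $|t_i-t_i^{\app}|\leq C_3\eta$ are sound.

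The gap is in the one quantitative step you assert rather than prove: the bound $\zeta(k_{i+1})-\zeta(k_i)\leq M\rho$ with $M$ \emph{independent of} $\eta$. This is false as stated, and "differentiating under the integral sign" does not deliver it. Writing $\zeta(k_{i+1})-\zeta(k_i)=\int_{x_{i+1}}^{x_i}k_{i+1,N}+\int_{x_i}^{x^{\text{meas}}}(k_{i+1,N}-k_{i,N})$ (as the paper does), the first term already carries a factor $|x_i-x_{i+1}|\leq \rho N\pi/(k_0^2\theta\eta)$, because consecutive resonant points are $\cO(\rho/\eta)$ apart when the profile varies slowly; equivalently, $\zeta'(k)=\int_{x^\star_k}^{x^{\text{meas}}}k/k_N$ is an improper integral whose singular part near $x^\star_k$ scales like a negative power of $\theta\eta$. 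So the correct bound is of the form $\cO(\rho\eta^{-1})+\cO(\rho(x^{\text{meas}}-x_0))$, and the constant necessarily degrades as $\eta\to 0$ — which is precisely why the proposition allows $\rho_0$ to depend on $\eta$. Your final choice "$Lc_2C_2\eta_0+M\rho_0<\pi/4$" would still close the argument once $M=M(\eta)$ is acknowledged, but as written the dependence is misstated and the estimate that constitutes the real content of the paper's proof (the splitting of the increment and the control of $|x_i-x_{i+1}|$ via $h^{-1}$ and $\theta$) is missing.
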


\begin{proof}
Using Corollary \ref{6_coro}, there exist $\eta_0,A>0$ such that if $\eta<\eta_0$ and $x^{\text{meas}}-x_0>A$ then
\begin{equation}\nonumber
\left|\frac{u_{k,N}(x^\text{meas})}{q(k)}-\Phi\circ \zeta(k)\right|\leq \min(1/4,\eta c_2C_2).
\end{equation} 
The quantity $1/4$ is chosen to ensure that in each case of the definition of $\Phi^{-1}$, we then have $1/4\leq z\leq 3/4$. Then, we see that for every $k\in K$, 
\begin{equation}\nonumber
\left|\Phi^{-1}\left(\frac{u_{k,N}(x^\text{meas})}{q(k)}\right)-\Phi^{-1}(\Phi\circ \zeta(k))\right|\leq \frac{1}{\sqrt{1-(3/4)^2}}\left|\frac{u_{k,N}(x^\text{meas})}{q(k)}-\Phi\circ \zeta(k)\right|< \min(1.6\,\eta c_2 C_2,\pi/8).
\end{equation}
We also notice that 
\begin{equation}\nonumber
0\leq t^\app_{i+1}-t^\app_i=\int_{x_{i+1}}^{x_i} k_{i+1,N} +\int_{x_i}^{x^{\text{meas}}} (k_{i+1,N}-k_{i,N}).
\end{equation}
Using the fact that
\begin{equation}\label{distxi}
|x_i-x_{i+1}|=\left|h^{-1}\left(\frac{N\pi}{k_i}\right)-h^{-1}\left(\frac{N\pi}{k_{i+1}}\right)\right|\leq \frac{1}{\theta\eta}\left|\frac{N\pi}{k_i}-\frac{N\pi}{k_{i+1}}\right|\leq \frac{\rho N \pi}{k_{0}^2\theta\eta},
\end{equation}
we have
\begin{equation}\nonumber
\int_{x_{i+1}}^{x_i} k_{i+1,N}\leq k_{n+1,N}(x^{\text{meas}})(x_i-x_{i+1})=\cO( \rho\eta^{-1}),
\end{equation}
\begin{equation}\nonumber
\int_{x_i}^{x^{\text{meas}}} (k_{i+1,N}-k_{i,N})\leq (x^{\text{meas}}-x_{{m+1}})\rho\frac{k_{m+1}}{\delta_{\g k}}=\cO(\rho (x^{\text{meas}}-x_0)),
\end{equation}
where we denote by $\cO$ every majoration depending only on $\delta_{\g k}$, $\theta$, $h_{\min}$, $h_{\max}$. If $\rho$ is small enough compared to $(x^{\text{meas}}-x_0)^{-1}$ and $\eta$ then
\begin{equation}\nonumber
| t^\app_{i+1}-t^\app_i| <\pi/4.
\end{equation}
Therefore, 
\begin{equation}\nonumber
|t_{i+1}-t_i|\leq |t_{i+1}-t_{i+1}^{\app}|+|t^\app_{i+1}-t^\app_i|+|t_{i}^\app-t_i|<\pi/2.
\end{equation}
Since there is a one-to-one correspondence between a sequence where the distance between each term does not exceed $\pi/2$ and its representation modulus $\pi$, we conclude our proof. 

\end{proof}

This result is illustrated in Figure \ref{6_postpross} where we see that providing a sufficiently small discretization of the frequency interval, we can recover an approximation of each $t_i^\app$ giving measurements of~$\Phi^{-1}(v_k)$. 

Lastly, we have to find the constant $\ell\in \N$ in order to reconstruct a complete approximation of $\zeta(k)$. Since 
\begin{equation}
t_1\approx t_1^\app \approx (x^{\text{meas}}-x_{k_1}^\star)k_{1,N}(x^{\text{meas}}) -\ell \pi, \quad t_2\approx t_2^\app \approx (x^{\text{meas}}-x_{k_1}^\star)k_{2,N}(x^{\text{meas}})-\ell \pi,
\end{equation}
we propose to approach $\ell$ by the following expression: 
\begin{equation}\label{6_ellapp}
\ell^{\app}=\left\lfloor\frac{t_2k_{1,N}(x^{\text{meas}})-t_1k_{2,N}(x^{\text{meas}})}{\pi(k_{2,N}(x^{\text{meas}})-k_{1,N}(x^{\text{meas}}))}\right\rfloor.
\end{equation} 
The following lemma proves that under certain assumptions, $\ell^{\app}$ is equal to $\ell$:

\begin{lem}\label{6_ell}
Under the same assumptions as in Proposition \ref{6_ti}, if $\delta_{\g k}$, $x_0-x_{k_2}^\star$ and $\eta$ are sufficiently small then $\ell=\ell^{\app}$.
\end{lem}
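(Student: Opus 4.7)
The plan is to exploit that $h\equiv h_{\max}$ on $[x_0,x^{\text{meas}}]$, which yields the exact decomposition $\zeta(k_i)=I(k_i)+(x^{\text{meas}}-x_0)\,K_i$, where $K_i:=k_{i,N}(x^{\text{meas}})=\sqrt{k_i^2-N^2\pi^2/h_{\max}^2}$ and $I(k_i):=\int_{x^\star_{k_i}}^{x_0}k_{i,N}(x)\,\dd x$ captures the ``defect'' contribution. Substituting $t_i=\zeta(k_i)-\ell\pi+\epsilon_i$ with $|\epsilon_i|\le C_3\eta$ (from Proposition~\ref{6_ti}) into the formula defining $\ell^{\app}$, the $(x^{\text{meas}}-x_0)K_i$ pieces cancel in the numerator and give
\[
\frac{t_2K_1-t_1K_2}{\pi(K_2-K_1)}=\ell+A+E,\qquad A:=\frac{I(k_2)K_1-I(k_1)K_2}{\pi(K_2-K_1)},\qquad E:=\frac{\epsilon_2K_1-\epsilon_1K_2}{\pi(K_2-K_1)}.
\]
The lemma then reduces to verifying that $A+E\in[0,1)$.

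First I would establish that $A\ge 0$ by showing that $k\mapsto I(k)/K(k)$ is monotone increasing. A direct Leibniz computation gives $I'(k)K(k)-I(k)K'(k)\ge 0$, because the boundary term in $I'(k)$ vanishes (since $k_{i,N}(x^\star_{k_i})=0$) and the resulting integrand is non-negative whenever $h(x)\le h_{\max}$ on $[x^\star_k,x_0]$. Next I would bound the magnitudes via $k_{i,N}(x)\le K_i$ on $[x^\star_{k_i},x_0]$, which yields $I(k_i)\le (x_0-x^\star_{k_i})K_i$. Combining this with $x^\star_{k_1}\ge x^\star_{k_2}$, the identity $K_2-K_1=2\rho\bar k/(K_1+K_2)$, and the Lipschitz-type estimate $K_i^2\lesssim\eta(x_0-x^\star_{k_i})$ (coming from $K_i^2=k_i^2-k_0^2$ and $|h_{\max}-h(x^\star_{k_i})|\le\eta(x_0-x^\star_{k_i})$), a direct calculation produces leading-order bounds of the form $A=O((x_0-x^\star_{k_2})^{3/2})$ and $|E|=O(\eta^2(x_0-x^\star_{k_2})/\rho)$.

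The main obstacle is securing $A+E\ge 0$, since $E$ has uncontrolled sign and both $A,|E|$ tend to zero as the smallness parameters shrink. The balance condition $|E|\le A$ reduces, by the two order estimates above, to a smallness requirement on $\eta$ relative to $\rho\sqrt{x_0-x^\star_{k_2}}$; small $\delta_{\g k}$ is used here to place $k_1$ close to $k_0$, ensuring that $K_1,K_2$ and the defect integrals $I(k_i)$ are themselves small enough for the leading-order expansions to be valid. Under the triple smallness hypothesis of the lemma this balance is attainable, and simultaneously the upper bound on $A$ gives $A<1$, so $A+E\in[0,1)$ and therefore $\ell^{\app}=\lfloor\ell+A+E\rfloor=\ell$.
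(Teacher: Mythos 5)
Your algebraic reduction is the same one the paper uses: substituting $t_i=\zeta(k_i)-\ell\pi+\epsilon_i$ into \eqref{6_ellapp} and writing $K_i:=k_{i,N}(x^{\text{meas}})$ gives $\frac{t_2K_1-t_1K_2}{\pi(K_2-K_1)}=\ell+(\text{perturbation})$. The paper packages the perturbation as $\frac{\eps_1K_2-\eps_2K_1}{\pi(K_2-K_1)}$, with $\eps_i$ absorbing both the defect contribution (referenced to $x_1$ rather than $x_0$) and the post-processing error, bounds $|\eps_i|\leq C_3\eta+c_3(x_0-x_2)\delta_{\g k}$, and concludes from $|\ell-\ell^{\app}|<1$. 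Your refinement --- splitting the perturbation into a defect part $A$ and an error part $E$, and proving $A\geq 0$ by monotonicity of $k\mapsto I(k)/K(k)$ --- is correct as far as it goes: the boundary term in $I'(k)$ does vanish because $k_N(x^\star_k)=0$, and $K(k)^2-k_N(x)^2=N^2\pi^2\big(h(x)^{-2}-h_{\max}^{-2}\big)\geq 0$. This is a sharper analysis of the sign of the perturbation than the paper attempts.

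The gap is in the final step. To conclude $\ell^{\app}=\lfloor\ell+A+E\rfloor=\ell$ you need $A+E\in[0,1)$, and since $E$ has arbitrary sign within $|E|\lesssim \eta(K_1+K_2)/(K_2-K_1)$, the lower inequality forces $A\geq|E|$, i.e.\ a \emph{positive lower bound} on $A$. You never produce one: every estimate you derive for $A$ (nonnegativity, $I(k_i)\leq(x_0-x^\star_{k_i})K_i$, $K_i^2\lesssim\eta(x_0-x^\star_{k_i})$) is an upper bound or a one-sided sign statement, and upper bounds on $A$ cannot dominate an upper bound on $|E|$. A genuine lower bound on $A$ would have to quantify how far $h$ stays below $h_{\max}$ on $(x^\star_{k_2},x_0)$, e.g.\ via $h'\geq\theta\eta$ on the defect region, and even then both $A$ and $|E|$ tend to zero as $\eta$, $\delta_{\g k}$ and $x_0-x^\star_{k_2}$ shrink, with nothing in the lemma's hypotheses fixing their relative rates; the ``balance condition'' you invoke is asserted, not established, and it cannot hold uniformly over admissible errors $\epsilon_i$ without such additional quantitative input. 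If you cannot supply the lower bound, the available fallback is the paper's coarser route: bound the \emph{whole} perturbation $A+E$ in modulus by $\big(|\eps_1|K_2+|\eps_2|K_1\big)/\pi(K_2-K_1)$ and make it small --- noting that this too only yields $\ell^{\app}\in\{\ell-1,\ell\}$ unless one additionally controls the sign, which is precisely the subtlety you correctly identified but did not close.
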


\begin{proof}
We notice that
\begin{equation}\nonumber
\ell\pi=\int_{x_1}^{x^{\text{meas}}} k_{1,N}(z) \dd z -t_1^{\app}\approx (x^{\text{meas}}-x_1)k_{1,N}(x^{\text{meas}})-t_1.
\end{equation}
More precisely, we introduce $\eps_1$ such that 
\begin{equation}\nonumber
(x^{\text{meas}}-x_1)k_{1,N}(x^{\text{meas}})=\ell\pi+t_1+\eps_1,
\end{equation}
and using Proposition \ref{6_ti}, there exists a constant $c_3>0$ such that 
\begin{equation}\nonumber
|\eps_1|\leq C_3 \eta +\left|\int_{x_1}^{x_0}(k_{1,N}(z)-k_{1,N}(x_0))\dd z\right| \leq  C_3\eta+c_3(x_0-x_2)\delta_{\g k}.
\end{equation}
Similarly, we introduce $\eps_2$ such that $(x^{\text{meas}}-x_1)k_{2,N}(x^{\text{meas}})=\ell\pi+t_2+\eps_2$ and 
\begin{equation}\nonumber
 |\eps_2| \leq C_3\eta+d_3(x_0-x_2)\delta_{\g k}.
\end{equation}
It follows that 
\begin{equation}\nonumber
|\ell- \ell^{\app}|\leq \frac{|\eps_2|k_{2,N}(x^{\text{meas}})+|\eps_1|k_{1,N}(x^{\text{meas}})}{k_{2,N}(x^{\text{meas}})-k_{1,N}(x^{\text{meas}})},
\end{equation}
and this quantity is smaller than $1$ if $\eps_1$ and $\eps_2$ are small enough. Since $\ell,\ell^\app\in \N$, it concludes the proof.
\end{proof}

Using this lemma, we can find the value of $\ell$. We represent in Figure \ref{6_postpross} the different steps to compute the left inverse of $\Phi$:
\begin{itemize}
\item We apply the function $\Phi^{-1}$ to the approximations of $\Phi\circ\zeta(k)$.
\item We recover each $\zeta(k_i)$ up to a constant $\ell$ is $\rho$ is small enough using the increase of $\zeta$. 
\item We compute the value of the constant $\ell$ using \eqref{6_ellapp}.
\end{itemize} 
As mentioned in Proposition \ref{6_ti}, we clearly see the importance of choosing a sufficiently small discretization of the frequency interval $\delta_{\g k}$ in order to get a good approximation of each $\zeta(k_i)$. 

To summarize, we proved in this section that if the step of discretization of $\g k$ is sufficiently small, we are able to post process the measurements in order to find an approximation of $\zeta(k_i)$ for every frequency $k_i\in \g k$. Moreover, this approximation is improving is $\eta$ diminishes. From now on, we work on data $(d_i)_{1\leq i \leq m}$ satisfying
\begin{equation}
\forall i=1,\dots, m \qquad d_i:=t_i+\ell\pi, \quad \left|d_i-\zeta(k_i) \right| \leq C_3\eta.
\end{equation}

\subsection{Triangular linear recovery of $\g x$}

Using the left inverse of $\Phi$ and the data provided in the previous section, we now try to reconstruct from $(\zeta(k_i))_{1\leq i \leq I}$ the set $\g x$. To do so, we need to find a link between $(\zeta(k_i))_{1\leq i \leq I}$ and $\g x$. Noticing that 
\begin{equation}
\forall 1\leq i\leq I \quad h(x_i)=\frac{N\pi}{k_i}, \qquad \forall 1\leq j<i\leq I\quad k_{i,N}(x_j)=\sqrt{k_i^2-\frac{N^2\pi^2}{h(x_j)^2}}=\sqrt{k_i^2-k_j^2},
\end{equation}
we choose to discretize the integral $\tint_{x_i}^{x^{\text{meas}}}k_{i,N}$ on the grid $(x_j)_{1\leq j \leq i}$. We define the known quantities
\begin{equation}\label{pij}
p_{i,j}:=\sqrt{|k_i^2-k_j^2|}.
\end{equation}

A first idea would be to use a rectangular rule to discretize the integral, as illustrated in the introduction in Figure \ref{layer_stripping}. However, since $h$ is increasing, $x\mapsto k_{i,N}(x)$ is concave and we know that for all $j>2$,
\begin{equation}
\int_{x_j}^{x_{j-1}}k_{i,N}\geq (x_{j-1}-x_j)\frac {k_{i,N}(x_j)+k_{i,N}(x_{j-1})}{2}=(x_{j-1}-x_j)\frac{p_{i,j}+p_{i,j-1}}{2},
\end{equation}
\begin{equation}
\int_{x_j}^{x_{j-1}}k_{i,N}\leq (x_{j-1}-x_j)k_{i,N}(x_{j-1})=(x_{j-1}-x_j)p_{i,j-1}.
\end{equation}
Instead of using a rectangular rule, we choose to approach the integral using the mean of these two bounds:
\begin{equation}
\int_{x_j}^{x_{j-1}}k_{i,N}\approx \frac{1}{4}(x_{j-1}-x_j)\left(p_{i,j}+3p_{i,j-1}\right). 
\end{equation}
When $j=1$, we cannot apply this approach and we simply choose to use
\begin{equation}
\int_{x_1}^{x^{\text{meas}}}k_{i,N}\approx (x^{\text{meas}}-x_1)k_{i,N}(x^{\text{meas}})=(x^{\text{meas}}-x_1)p_{1,0}. 
\end{equation}
\begin{rem}
We chose here to approach the integral by taking the mean of a rectangle and a trapezoidal rule. However, any barycenter between these two bounds would also work. Further investigations may prove that the mean may not be the optimal choice. However, giving the lack of regularity of $k_{i,N}$ around $x_i$, any quadrature method should give the same error estimation up to a constant. 
\end{rem}
We define the triangular matrix
\begin{equation}\label{6_M}
T:=\frac{1}{4}\left(\begin{array}{ccccc} 4p_{1,0} & 0 & 0 & \hdots & 0 \\
4p_{2,0} & 3p_{2,1} & 0 &  & \vdots \\[2mm]
4p_{3,0} & p_{3,2}+3p_{3,1} & \ddots & \ddots & \vdots \\ 
\vdots & \vdots & \ddots & 3p_{I-1,I-2} & 0 \\
4p_{I,0} & p_{I,2}+3p_{I,1} & \hdots & p_{I,I-1}+3p_{I,I-2} & 3p_{I,I-1}\end{array}\right), 
\end{equation}
and we expect that
\begin{equation}
TV\approx d:=(d_i)_{1\leq i \leq I}, \qquad \text{ where }\qquad  V=\left(\begin{array}{c}
x^{\text{meas}}-x_1 \\ x_1-x_2 \\ \vdots \\ x_{I-1}-x_I \end{array}\right). 
\end{equation}
The following proposition quantify the error of this approximation: 

\begin{prop}\label{6_mMV}
Assume that $\Omega$ satisfies Assumption \ref{6_def:slow}. There exist $\eta_0,A>0$ such that if  $\eta<\eta_0$, $x^{\text{meas}}-x_0>A$, there exists $\rho_0>0$ such that if $\rho<\rho_0$, then there exist constants $C_4,C_5>0$ depending on $\delta_{\g k}$, $\theta$, $|\supp(h')|$, $h_{\min}$ and $h_{\max}$ such that 
\begin{equation}
\Vert d-TV\Vert_\infty \leq C_3\eta+C_4(x_0-x_1)+ C_5\eta^{1/2}\rho^{-1}.
\end{equation}
\end{prop}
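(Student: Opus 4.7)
The plan is to bound $|d_i - (TV)_i|$ row by row using the triangle inequality. The contribution $|d_i - \zeta(k_i)|$ is handled directly by Proposition~\ref{6_ti}, which yields the first term $C_3\eta$. It remains to estimate $|\zeta(k_i) - (TV)_i|$ by splitting the integral $\zeta(k_i) = \int_{x_i}^{x^{\text{meas}}} k_{i,N}(x)\,\dd x$ on the partition $x_i < x_{i-1} < \cdots < x_1 < x^{\text{meas}}$ and comparing each piece to the corresponding term in $(TV)_i$. On the terminal segment $[x_1, x^{\text{meas}}]$, Assumption~\ref{6_def:slow} forces $h \equiv h_{\max}$ on $[x_0, x^{\text{meas}}]$, so $k_{i,N}$ is identically $p_{i,0}$ there; the discrepancy reduces to the subinterval $[x_1, x_0]$ and is bounded by $C_4(x_0 - x_1)$ via $\|k_{i,N}\|_\infty \leq k_{I+1}$. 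This produces the second term.

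For each interior subinterval $[x_j, x_{j-1}]$ with $j < i$, I exploit that $h$ monotone increasing, combined with $\|h''\|_\infty \leq \eta^2$, makes $k_{i,N}$ concave. The exact integral is therefore sandwiched between the trapezoidal and right-endpoint rectangle approximations, and since the chosen rule is their arithmetic mean, the quadrature error is at most half their difference:
\begin{equation*}
\left|\int_{x_j}^{x_{j-1}} k_{i,N} - \tfrac{1}{4}(p_{i,j}+3p_{i,j-1})(x_{j-1}-x_j)\right| \leq \tfrac{1}{4}(p_{i,j-1}-p_{i,j})(x_{j-1}-x_j).
\end{equation*}
Using \eqref{distxi} to control $x_{j-1}-x_j \leq C\rho/\eta$, the identity $p_{i,j-1}-p_{i,j} = (k_j^2-k_{j-1}^2)/(p_{i,j-1}+p_{i,j})$, and the lower bound $p_{i,j} \geq c\sqrt{(i-j)\rho}$, the per-interval error is at most $C\rho^2/(\eta\, p_{i,j})$. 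The critical interval is $j=i$, where $k_{i,N}$ vanishes at $x_i$ with square-root behaviour. The same concavity sandwich still bounds the quadrature error by $\tfrac{1}{4}p_{i,i-1}(x_{i-1}-x_i) = O(\rho^{3/2}/\eta)$, using $p_{i,i-1}\leq C\sqrt\rho$.

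I then sum the per-interval contributions. Using $p_{i,j} \geq c\sqrt{(i-j)\rho}$ together with $I \leq C\eta/\rho$, which follows from the equi-spacing of $\g k$ inside an interval $K_N^\text{res}$ of length $O(\eta)$, the interior sum $\sum_{j=2}^{i-1} 1/p_{i,j}$ is controlled by $\sum_{m=1}^{I}1/\sqrt{m\rho}\leq C\sqrt{I/\rho}$. Multiplying by the prefactor $\rho^2/\eta$ and collecting the singular $j=i$ contribution yields a bound of the form $C_5 \eta^{1/2}\rho^{-1}$ (the explicit constant depending on $\delta_{\g k}$, $\theta$, $|\supp(h')|$, $h_{\min}$, $h_{\max}$).

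The main obstacle is the singular interval at $j=i$: without invoking concavity, the standard Taylor/Peano estimates would diverge at $x_i$ because $k_{i,N}'(x)\sim 1/\sqrt{x-x_i}$, so strict monotonicity of $h$ must be used in an essential way through the concavity-based enclosure. A secondary difficulty is the bookkeeping in the summation; the small denominators $1/p_{i,j}$ for $j$ close to $i$ accumulate, and their interplay with the worst-case number of frequencies $I\leq C\eta/\rho$ is what produces the final $\eta^{1/2}\rho^{-1}$ rate rather than the naive interval-length scaling.
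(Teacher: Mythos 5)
Your proposal is correct and reaches the stated bound, but the core quadrature estimate is handled by a genuinely different mechanism than the paper's. The decomposition is identical: the $C_3\eta$ term comes from $|d_i-\zeta(k_i)|\leq C_3\eta$, the terminal segment $[x_1,x^{\text{meas}}]$ reduces to $[x_1,x_0]$ because $h\equiv h_{\max}$ to the right of $x_0$, and the remaining error is a sum of per-subinterval quadrature errors with $\sum_{j}(i-j)^{-1/2}=\cO(\sqrt I)$ and $I\lesssim \rho^{-1}$ closing the argument. Where you differ is on each $[x_j,x_{j-1}]$: the paper picks the point $z$ where $k_{i,N}(z)=(p_{i,j}+3p_{i,j-1})/4$ (which exists by monotonicity and the intermediate value theorem) and integrates the pointwise difference $|k_{i,N}(x)-k_{i,N}(z)|$, using $|k_{i,N}'|\lesssim \eta/k_{i,N}$ away from $x_i$ and, on the singular side of the interval $j=i$, the explicit lower bound $k_{i,N}(x)\geq c_4\eta^{1/2}(x-x_i)^{1/2}$ from \cite{niclas1}. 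You instead enclose the integral between the trapezoidal and right-endpoint rules and observe that the chosen weight is their midpoint, so the error is at most $\tfrac14(p_{i,j-1}-p_{i,j})(x_{j-1}-x_j)$; this treats the singular interval $j=i$ with no extra work, which is the main thing your route buys. Both give per-interval errors of order $\rho^{3/2}\eta^{-1}(i-j)^{-1/2}$ and the same final rate (yours is in fact slightly sharper, $\cO(\rho\,\eta^{-1/2})$ for the interior sum, which is dominated by $C_5\eta^{1/2}\rho^{-1}$ since $\rho\lesssim\eta$). One caveat: you lean on the concavity of $k_{i,N}$, which the paper asserts before the proposition but which does not follow cleanly from Assumption \ref{6_def:slow} alone (it requires $\phi''(h)h'^2+\phi'(h)h''\leq 0$ with $\phi(h)=\sqrt{k_i^2-N^2\pi^2/h^2}$, and $h''$ may be positive); note however that monotonicity of $k_{i,N}$ alone sandwiches the integral between the two one-sided rectangle rules and gives the same per-interval bound up to a factor $4$, so your argument survives without concavity — it would be worth saying so explicitly.
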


\begin{rem}
We notice that in addition of the need for $\eta$
to be small, we need to ensure that $x_0$ is sufficiently precise and close to $x_1$ if we want a small error between $d$ and $TV$. This assumption was already evoked in Lemma \ref{6_ell} to find the quantity $\ell$. \end{rem}

\begin{rem}
We notice in this inequality that if $\eta$ tends to zero then the error decreases. However, for a fixed $\eta$, the parameter $\rho$ needs to be chosen wisely since it needs to be small enough in order for the post processing of the measurements to work, and not too small so that the matrix $T$ is well conditioned. We will discuss this point with more detail in Section 4.
\end{rem}

\begin{proof}
Using the proof of Lemma \ref{6_ell}, we already know that 
\begin{equation}\nonumber
\left| \int_{x_1}^{x^{\text{meas}}} k_{i,N} -(x^{\text{meas}}-x_1)p_{i,0}\right| \leq C_3\eta+ c_3(x_0-x_1)\delta_{\g k}.  
\end{equation}
Then, for $x\in (x_j,x_{j-1})$, we denote by $z\in (x_j,x_{j-1})$ the coordinate such that $k_{i,N}(z)=(p_{i,j}+3p_{i,j-1})/4$. If $j<i$, then 
\begin{equation}\nonumber
\left|k_{i,N}(x)-k_{i,N}(z)\right|\leq |x-z|\frac{N^2\pi^2\eta}{h_{\min}^3k_{i,N}(x_j)}\leq|x_j-x_{j-1}|\frac{N^2\pi^2\eta}{h_{\min}^3\sqrt{2}\sqrt{k_{0}}\sqrt{i-j}\rho^{1/2}}.
\end{equation}
If $j=i$ and $x>z$, then  
\begin{equation}\nonumber
\left|k_{i,N}(x)-k_{i,N}(z)\right|\leq |x_i-x_{i-1}|\frac{4N^2\pi^2\eta }{3h_{\min}^3p_{i,i-1}}\leq |x_i-x_{i-1}|\frac{4N^2\pi^2\eta }{3h_{\min}^3\rho^{1/2}\sqrt{2}\sqrt{k_{0}}}.
\end{equation}
If $j=i$ and $x<z$, we use the approximation of $k_{i,N}$ near $x_i$ demonstrated in \cite[(20)]{niclas1} to deduce that there exists a constant $c_4>0$ depending on $\theta$ such that
\begin{equation}\nonumber
k_{i,N}(x)\geq c_4\eta^{1/2}(x-x_i)^{1/2}.
\end{equation}
If follows that 
\begin{equation}\nonumber
|k_{i,N}(x)-k_{i,N}(z)|\leq |x_i-x_{i-1}|\frac{N^2\pi^2\eta^{1/2}}{h_{\min}^3c_4(x-x_i)^{1/2}},
\end{equation}
\begin{equation}\nonumber \int_{x_i}^{z} |k_{i,N}(x)-k_{i,N}(z)|\dd x\leq 2|x_i-x_{i-1}|^{3/2}\eta^{1/2}\frac{N^2\pi^2}{h_{\min}^3c_4}.
\end{equation}
By noticing that $|x_i-x_{i-1}|\leq x_0-x_{I+1}$, we conclude that there exists a constant $c_5>0$ such that 
\begin{equation}\nonumber
\left|\int_{x_j}^{x_{j-1}}k_{i,N}- \frac{1}{4}(x_{j-1}-x_j)\left(p_{i,j}+3p_{i,j-1}\right)\right|\leq c_5\eta^{1/2}\rho^{-1/2}\left(\delta_{i=j}+\frac{1}{\sqrt{i-j}}\delta_{i>j}\right). 
\end{equation}
By assembling everything, we see that
\begin{equation}\nonumber
|d_i-(TV)_i|\leq C_3\eta+ \delta_{\g k} c_3(x_0-x_2)+c_5\eta^{1/2}\rho^{-1/2}\left(1+\sum_{j=1}^{i-1}\frac{1}{\sqrt{j}}\right).
\end{equation}
We conclude the proof by noticing that $\sum_{j=1}^i j^{-1/2}=\cO\left(\sqrt{i}\right)=\cO(\rho^{-1/2})$. 
\end{proof}

The matrix $T$ is a lower triangular matrix with non vanishing diagonal entries so it is invertible. To find an approximation of vector $V$ and of $\g x$ given data $d$, we define
\begin{equation}
V^{\app}:=T^{-1}d, \qquad (\g x^\app)_i:=x^\app_i:=x^{\text{meas}}-\sum_{j=1}^i V^{\app}_j.  
\end{equation}

\subsection{Reconstruction of $h$}

We now have approximations of each $x_i$. Since we know that $h(x_i)=N\pi/k_i$, we define the approximation of the width $h$ by
\begin{equation}\label{6_happ}
 h^{\app}(x^\app_i)=\frac{N\pi}{k_i} \quad \forall\,  k_i\in \g k, \qquad  h^{\app}(x_0)=\frac{N\pi}{k_{m+1}}, \qquad h^{\app}(x_{m+1})=\frac{N\pi}{k_{0}}. 
\end{equation}
The following Theorem concludes the reconstruction process and proves that we are able to quantify the error of reconstruction between $h^{\app}$ and $h$:  

\begin{theorem}\label{6_th2}
Let $\g k$ be a subset of $K^\text{res}_N$. Assume that $\Omega$ satisfies assumption \ref{6_def:slow}. There exist $\eta_0,A>0$ such that if $\eta<\eta_0$ and $x^{\text{meas}}-x_0>A$, there exists $\rho_0>0$ such that if $\rho<\rho_0$ then there exist constants $C_3,C_4,C_5,C_6>0$ depending on $\theta$, $\delta_{\g k}$, $|\supp(h)|$, $h_{\min}$ and $h_{\max}$ such that 
\begin{equation}
\Vert h^{\app}(\g x^\app)-h(\g x^\app)\Vert_\infty\leq \eta\rho^{-5/2} C_6\left(C_3\eta+C_4(x-x_1)+ C_5\eta^{1/2}\rho^{-1}\right).
\end{equation}
\end{theorem}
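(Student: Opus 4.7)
The plan is to reduce the width reconstruction error first to a position reconstruction error, and then to a linear algebra bound on $T^{-1}$. By the definition \eqref{6_happ} of $h^{\app}$ and the identity $h(x_i)=N\pi/k_i$, we have $h^{\app}(x_i^{\app})=h(x_i)$ for every $i$. Combining this with the bound $\Vert h'\Vert_{\tL^\infty(\R)}\le \eta$ from Assumption \ref{6_def:slow} gives
\begin{equation*}
|h^{\app}(x_i^{\app})-h(x_i^{\app})|=|h(x_i)-h(x_i^{\app})|\le \eta\,|x_i-x_i^{\app}|.
\end{equation*}
Hence it suffices to prove a bound of the form $\Vert \g x-\g x^{\app}\Vert_\infty \le C_6\,\rho^{-5/2}\,\Vert d-TV\Vert_\infty$ and then invoke Proposition \ref{6_mMV} to control $\Vert d-TV\Vert_\infty$.

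Next I would linearize the position error. Writing $A$ for the $I\times I$ lower triangular matrix of ones, the relation $x_i-x_i^{\app}=\sum_{j=1}^i (V_j^{\app}-V_j)$ reads $\g x-\g x^{\app}=-A(V-V^{\app})$. Since $V^{\app}=T^{-1}d$, we have $V^{\app}-V=T^{-1}(d-TV)$, and therefore
\begin{equation*}
\Vert \g x-\g x^{\app}\Vert_\infty \le \Vert AT^{-1}\Vert_\infty\,\Vert d-TV\Vert_\infty,
\end{equation*}
where $\Vert\cdot\Vert_\infty$ denotes the induced sup norm on matrices.

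The main obstacle is to establish $\Vert AT^{-1}\Vert_\infty \le C_6\,\rho^{-5/2}$ for some $C_6$ depending only on $\theta$, $\delta_{\g k}$, $|\supp(h')|$, $h_{\min}$ and $h_{\max}$. Recall that $T$ is lower triangular with diagonal $T_{ii}=\tfrac{3}{4}p_{i,i-1}$ of order $\sqrt{\rho}$, while the strictly subdiagonal entries $T_{ij}=(p_{i,j}+3p_{i,j-1})/4$ satisfy $p_{i,j}\asymp\sqrt{(i-j)\rho}$ for $j$ close to $i$ and are bounded by a constant depending on $h_{\min},h_{\max}$ otherwise. A careful analysis of the triangular-solve recursion, exploiting this scaling and the fact that consecutive rows of $T$ differ by increments mimicking the quadrature of $k_{i,N}$, should yield that each row of $T^{-1}$ has $\ell^1$ norm $O(\rho^{-3/2})$. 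Multiplying on the left by $A$ then accumulates $I=O(\rho^{-1})$ such rows, and the worst row of $AT^{-1}$ has $\ell^1$ norm at most $O(\rho^{-5/2})$. This step is delicate because the diagonal of $T$ is much smaller than some of its off-diagonal entries, so no Neumann-series argument applies: one must work directly with the recursion satisfied by the entries of $T^{-1}$ and exploit partial cancellations coming from the quadrature structure of $T$.

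Finally, plugging $\Vert AT^{-1}\Vert_\infty \le C_6\rho^{-5/2}$ into the two inequalities above, multiplying by $\eta$, and using Proposition \ref{6_mMV} to bound $\Vert d-TV\Vert_\infty$ gives
\begin{equation*}
\Vert h^{\app}(\g x^{\app})-h(\g x^{\app})\Vert_\infty \le \eta\,C_6\,\rho^{-5/2}\bigl(C_3\eta+C_4(x_0-x_1)+C_5\eta^{1/2}\rho^{-1}\bigr),
\end{equation*}
which is the claimed estimate.
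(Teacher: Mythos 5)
Your proposal follows the paper's proof essentially verbatim: reduce the width error to the position error via $|h^{\app}(x_i^{\app})-h(x_i^{\app})|=|h(x_i)-h(x_i^{\app})|\leq\eta|x_i-x_i^{\app}|$, express $x_i-x_i^{\app}$ as the $i$-th partial sum of $T^{-1}(d-TV)$, bound that by $\cO(\rho^{-5/2})\Vert d-TV\Vert_\infty$, and conclude with Proposition \ref{6_mMV}. The one step you leave as ``should yield'' --- each row of $T^{-1}$ having $\ell^1$-norm $\cO(\rho^{-3/2})$ --- is precisely what the paper establishes by writing out the forward-substitution recursion for $b=T^{-1}a$ explicitly and using $p_{i,i-1}\gtrsim\sqrt{\rho}$ together with $I=\cO(\rho^{-1})$, so your outline is the intended argument.
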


\begin{rem}
We notice in this inequality that if $\eta$ tends to zero then the error tends to zero. However, as mentionned before, for a fixed $\eta$ the parameter $\rho$ needs to be chosen wisely since $T$ is not well conditioned. We illustrate it in Figure \ref{6_choixrho} where we present two different reconstructions of $h$ with the same data $(d_i)$ discretized for two different sets of frequencies $\g k$. We notice that the relative error of reconstruction $\Vert h-h^\app\Vert_\infty/h_{\max}$ increases when $I$ increases (and when $\rho$ decreases). 
\end{rem}

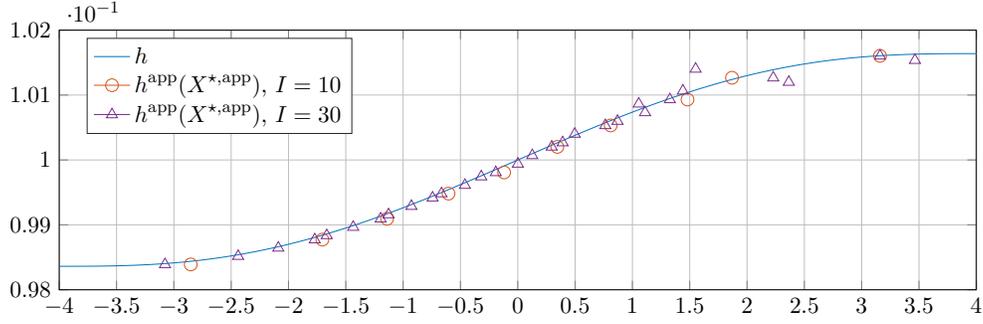
\begin{figure}[h]
\begin{center}
\scalebox{0.8}{
%
%
\definecolor{mycolor1}{rgb}{0.00000,0.44700,0.74100}%
\definecolor{mycolor2}{rgb}{0.85000,0.32500,0.09800}%
\definecolor{mycolor3}{rgb}{0.49400,0.18400,0.55600}%
\begin{tikzpicture}

\begin{axis}[%
width=6in,
height=1.7in,
at={(1.795in,0.489in)},
scale only axis,
xmin=-4,
xmax=4,
ymin=0.098,
ymax=0.102,
grid=major, 
scaled y ticks=base 10:1,
axis background/.style={fill=white},
legend style={legend pos= north west, legend cell align=left, align=left, draw=white!15!black}
]
\addplot [color=mycolor1]
  table[row sep=crcr]{%
-7	0.0983616\\
-6.92964824120603	0.0983616\\
-6.85929648241206	0.0983616\\
-6.78894472361809	0.0983616\\
-6.71859296482412	0.0983616\\
-6.64824120603015	0.0983616\\
-6.57788944723618	0.0983616\\
-6.50753768844221	0.0983616\\
-6.43718592964824	0.0983616\\
-6.36683417085427	0.0983616\\
-6.2964824120603	0.0983616\\
-6.22613065326633	0.0983616\\
-6.15577889447236	0.0983616\\
-6.08542713567839	0.0983616\\
-6.01507537688442	0.0983616\\
-5.94472361809045	0.0983616\\
-5.87437185929648	0.0983616\\
-5.80402010050251	0.0983616\\
-5.73366834170854	0.0983616\\
-5.66331658291457	0.0983616\\
-5.5929648241206	0.0983616\\
-5.52261306532663	0.0983616\\
-5.45226130653266	0.0983616\\
-5.38190954773869	0.0983616\\
-5.31155778894472	0.0983616\\
-5.24120603015075	0.0983616\\
-5.17085427135678	0.0983616\\
-5.10050251256281	0.0983616\\
-5.03015075376884	0.0983616\\
-4.95979899497487	0.0983616\\
-4.8894472361809	0.0983616\\
-4.81909547738693	0.0983616\\
-4.74874371859296	0.0983616\\
-4.678391959799	0.0983616\\
-4.60804020100502	0.0983616\\
-4.53768844221106	0.0983616\\
-4.46733668341709	0.0983616\\
-4.39698492462312	0.0983616\\
-4.32663316582915	0.0983616\\
-4.25628140703518	0.0983616\\
-4.18592964824121	0.0983616\\
-4.11557788944724	0.0983616\\
-4.04522613065327	0.0983616\\
-3.9748743718593	0.0983616010103748\\
-3.90452261306533	0.098361654710938\\
-3.83417085427136	0.098361882852626\\
-3.76381909547739	0.0983624062705279\\
-3.69346733668342	0.098363339037287\\
-3.62311557788945	0.0983647885871824\\
-3.55276381909548	0.0983668558402104\\
-3.48241206030151	0.0983696353261665\\
-3.41206030150754	0.0983732153087267\\
-3.34170854271357	0.0983776779095289\\
-3.2713567839196	0.0983830992322548\\
-3.20100502512563	0.0983895494867116\\
-3.13065326633166	0.0983970931129128\\
-3.06030150753769	0.0984057889051607\\
-2.98994974874372	0.0984156901361276\\
-2.91959798994975	0.098426844680937\\
-2.84924623115578	0.0984392951412458\\
-2.77889447236181	0.0984530789693256\\
-2.70854271356784	0.098468228592144\\
-2.63819095477387	0.0984847715354469\\
-2.5678391959799	0.098502730547839\\
-2.49748743718593	0.0985221237248666\\
-2.42713567839196	0.0985429646330982\\
-2.35678391959799	0.0985652624342065\\
-2.28643216080402	0.09858902200905\\
-2.21608040201005	0.0986142440817543\\
-2.14572864321608	0.098640925343794\\
-2.07537688442211	0.0986690585780742\\
-2.00502512562814	0.0986986327830118\\
-1.93467336683417	0.0987296332966174\\
-1.8643216080402	0.0987620419205768\\
-1.79396984924623	0.0987958370443324\\
-1.72361809045226	0.0988309937691649\\
-1.65326633165829	0.098867484032275\\
-1.58291457286432	0.0989052767308648\\
-1.51256281407035	0.0989443378462193\\
-1.44221105527638	0.0989846305677882\\
-1.37185929648241	0.0990261154172674\\
-1.30150753768844	0.0990687503726804\\
-1.23115577889447	0.0991124909924602\\
-1.1608040201005	0.0991572905395306\\
-1.09045226130653	0.0992031001053877\\
-1.02010050251256	0.0992498687341819\\
-0.949748743718593	0.0992975435467991\\
-0.879396984924623	0.0993460698649423\\
-0.809045226130653	0.0993953913352134\\
-0.738693467336684	0.0994454500531946\\
-0.668341708542713	0.0994961866875298\\
-0.597989949748744	0.0995475406040066\\
-0.527638190954774	0.0995994499896377\\
-0.457286432160804	0.0996518519767422\\
-0.386934673366834	0.0997046827670275\\
-0.316582914572864	0.0997578777556709\\
-0.246231155778895	0.0998113716554008\\
-0.175879396984925	0.0998650986205787\\
-0.105527638190955	0.0999189923712806\\
-0.0351758793969852	0.0999729863173783\\
0.0351758793969852	0.100027013682622\\
0.105527638190955	0.100081007628719\\
0.175879396984925	0.100134901379421\\
0.246231155778895	0.100188628344599\\
0.316582914572864	0.100242122244329\\
0.386934673366834	0.100295317232972\\
0.457286432160804	0.100348148023258\\
0.527638190954774	0.100400550010362\\
0.597989949748744	0.100452459395993\\
0.668341708542713	0.10050381331247\\
0.738693467336684	0.100554549946805\\
0.809045226130653	0.100604608664787\\
0.879396984924623	0.100653930135058\\
0.949748743718593	0.100702456453201\\
1.02010050251256	0.100750131265818\\
1.09045226130653	0.100796899894612\\
1.1608040201005	0.100842709460469\\
1.23115577889447	0.10088750900754\\
1.30150753768844	0.10093124962732\\
1.37185929648241	0.100973884582733\\
1.44221105527638	0.101015369432212\\
1.51256281407035	0.101055662153781\\
1.58291457286432	0.101094723269135\\
1.65326633165829	0.101132515967725\\
1.72361809045226	0.101169006230835\\
1.79396984924623	0.101204162955668\\
1.8643216080402	0.101237958079423\\
1.93467336683417	0.101270366703383\\
2.00502512562814	0.101301367216988\\
2.07537688442211	0.101330941421926\\
2.14572864321608	0.101359074656206\\
2.21608040201005	0.101385755918246\\
2.28643216080402	0.10141097799095\\
2.35678391959799	0.101434737565794\\
2.42713567839196	0.101457035366902\\
2.49748743718593	0.101477876275133\\
2.5678391959799	0.101497269452161\\
2.63819095477387	0.101515228464553\\
2.70854271356784	0.101531771407856\\
2.77889447236181	0.101546921030674\\
2.84924623115578	0.101560704858754\\
2.91959798994975	0.101573155319063\\
2.98994974874372	0.101584309863872\\
3.06030150753769	0.101594211094839\\
3.13065326633166	0.101602906887087\\
3.20100502512563	0.101610450513288\\
3.2713567839196	0.101616900767745\\
3.34170854271357	0.101622322090471\\
3.41206030150754	0.101626784691273\\
3.48241206030151	0.101630364673833\\
3.55276381909548	0.10163314415979\\
3.62311557788945	0.101635211412818\\
3.69346733668342	0.101636660962713\\
3.76381909547739	0.101637593729472\\
3.83417085427136	0.101638117147374\\
3.90452261306533	0.101638345289062\\
3.9748743718593	0.101638398989625\\
4.04522613065327	0.1016384\\
4.11557788944724	0.1016384\\
4.18592964824121	0.1016384\\
4.25628140703518	0.1016384\\
4.32663316582915	0.1016384\\
4.39698492462312	0.1016384\\
4.46733668341709	0.1016384\\
4.53768844221105	0.1016384\\
4.60804020100502	0.1016384\\
4.678391959799	0.1016384\\
4.74874371859296	0.1016384\\
4.81909547738693	0.1016384\\
4.8894472361809	0.1016384\\
4.95979899497488	0.1016384\\
5.03015075376884	0.1016384\\
5.10050251256281	0.1016384\\
5.17085427135678	0.1016384\\
5.24120603015075	0.1016384\\
5.31155778894472	0.1016384\\
5.38190954773869	0.1016384\\
5.45226130653266	0.1016384\\
5.52261306532663	0.1016384\\
5.5929648241206	0.1016384\\
5.66331658291457	0.1016384\\
5.73366834170854	0.1016384\\
5.80402010050251	0.1016384\\
5.87437185929648	0.1016384\\
5.94472361809045	0.1016384\\
6.01507537688442	0.1016384\\
6.08542713567839	0.1016384\\
6.15577889447236	0.1016384\\
6.22613065326633	0.1016384\\
6.2964824120603	0.1016384\\
6.36683417085427	0.1016384\\
6.43718592964824	0.1016384\\
6.50753768844221	0.1016384\\
6.57788944723618	0.1016384\\
6.64824120603015	0.1016384\\
6.71859296482412	0.1016384\\
6.78894472361809	0.1016384\\
6.85929648241206	0.1016384\\
6.92964824120603	0.1016384\\
7	0.1016384\\
};
\addlegendentry{$h$}

\addplot [color=mycolor2, mark=o, draw=none,mark options={solid, mycolor2},mark size=3]
  table[row sep=crcr]{%
3.15992681827936	0.101603902121274\\
1.86874695084948	0.101266365393683\\
1.48062481965115	0.100931063891409\\
0.810673231569611	0.100531621447911\\
0.344965668237285	0.10020115995411\\
-0.118937668970855	0.0998074626549745\\
-0.60384861698185	0.0994817371241436\\
-1.14023085964241	0.0990936619069045\\
-1.70423089229585	0.0987725712545315\\
-2.85269811315541	0.0983899985465015\\
};
\addlegendentry{$h^{\app}(X^{\star,\app})$, $I=10$}

\addplot [color=mycolor3, mark=triangle, draw=none,mark options={solid, mycolor3}, mark size=3]
  table[row sep=crcr]{%
3.15992681827936	0.101603902121274\\
3.46447123055149	0.101536214885594\\
1.55134808263681	0.10140111060852\\
2.2284018648142	0.101266365393683\\
2.36437054287652	0.101199126987588\\
1.44045905721514	0.101064917688161\\
1.32702463658059	0.100931063891409\\
1.05524150071383	0.100864269865416\\
1.11045228178805	0.100730946680037\\
0.869979568850857	0.100597975484666\\
0.763585894568397	0.100531621447911\\
0.49692021009269	0.100399175629639\\
0.391690881166576	0.100267078334831\\
0.296233756613623	0.10020115995411\\
0.126588550471512	0.100069582870748\\
0.000892483518736248	0.0999383508896794\\
-0.191771457691838	0.0998074626549745\\
-0.318576112317303	0.0997421470207209\\
-0.460970968497254	0.0996117718786964\\
-0.665107343576395	0.0994817371241436\\
-0.743005893616796	0.0994168469758655\\
-0.927710101490295	0.099287320308495\\
-1.12702950782785	0.099158130713324\\
-1.19607959831642	0.0990936619069045\\
-1.43546084331976	0.098964975458315\\
-1.66635885756229	0.0988366228095665\\
-1.77109408146675	0.0987725712545315\\
-2.0887532179939	0.0986447168756127\\
-2.43905998452328	0.0985171930663978\\
-3.07730682127565	0.0983899985465015\\
};
\addlegendentry{$h^{\app}(X^{\star,\app})$, $I=30$}

\end{axis}

\end{tikzpicture}
\end{center}
\caption{\label{6_choixrho} Representation of $h$ and $h^{\app}$ with post processing measurements discretized with $I=10$ or $I=30$ points. Due to the bad condition number of $T$, the use of too many frequency points diminishes the quality of the reconstruction: the error $\Vert h-h^\app\Vert_\infty/h_{\max}$ is $2.7\%$ when $I=10$ and $7.9\%$ when $I=30$. Here, $x^{\text{meas}}=6$, $h$ defined in \eqref{6_h1} and sources in \eqref{6_eqmatlab} are defined in \eqref{6_sourceincr}.} 
\end{figure}

\begin{proof}
For every $1\leq i \leq I$, we see that 
\begin{equation}\nonumber
|x_i^\app-x_i|\leq \left|\sum_{j=1}^{i} V^\app_j-V_j\right|\leq \left|\sum_{j=1}^i T^{-1}(d_j-(TV)_j)\right|.
\end{equation}
We denote $a=d-TV$, and $b=T^{-1}a$. We see that 
\begin{equation}\nonumber
b_1=\frac{a_1}{p_{1,0}},\qquad b_i=\frac{4}{3p_{i,i-1}}\left[a_i-p_{i,0}a_1-\frac{1}{4}\sum_{j=2}^{i-1} (p_{i,j}+3p_{i,j-1})a_j\right],
\end{equation}
Using the definition of $p_{i,j}$ given in \eqref{pij}, we notice that $p_{i,j}=\cO(\rho)$ and it follows that 
\begin{equation}\nonumber
b_1=\cO(\Vert a\Vert_\infty), \qquad b_i=\cO(\rho^{-1/2}I\Vert a\Vert_\infty)=\cO(\rho^{-3/2}\Vert a\Vert_{\infty}),
\end{equation}
\begin{equation}\nonumber
\left|\sum_{j=1}^i b_j\right|=\cO(\Vert a\Vert_\infty \rho^{-3/2}I)=\cO(\Vert a\Vert_\infty \rho^{-5/2}).
\end{equation}
We conclude using Proposition \ref{6_mMV} and the fact that 
\begin{equation}\nonumber
|h^{\app}(x_i^\app)-h(x_i^\app)|\leq |h(x_i)-h(x^\app_i)|\leq \eta |x_i-x_i^\app|.
\end{equation}  

\end{proof}

To summarize, we have presented here a layer stripping method to recover an approximation of the width $h$. After processing the measurements in Section 3.1, we get rid of the periodicity of the data in Section 3.2 using a small discretization step $\rho$. Then, we solve a triangular system to provide a reconstruction of each $x_i$ in Section 3.3 and an approximation of $h$ in Section 3.4. The triangular system been not well-conditioned, we need to ensure that the discretization step $\rho$ is not too small if we want a controlled reconstruction error. The following section aims at applying this reconstruction method to numerical data.

\section{Numerical computations}

In this section, we show some numerical applications of our reconstruction method on slowly varying waveguides. We simulate section measurements using numerically generated data, and we provide reconstructions of increasing waveguides with different shape profiles. 

\subsection{Generation of data \label{6_num}}
In the following, numerical solutions of \eqref{6_eqmatlab} are generated using the software Matlab to solve numerically the equation in the waveguide $\Omega$. In every numerical simulation, we assume that $h'$ is supported between $x=-7$ and $x=7$. To generate the solution $u$ of \eqref{6_eqmatlab} on $\Omega_7$, we use a self-coded finite element method and a perfectly matched layer \cite{berenger1} on the left side of the waveguide between $x=-15$ and $x=-8$ and on the right side between $x=8$ and $x=15$. The coefficient of absorption for the perfectly matched layer is defined as $\alpha=-k((x-8)\textbf{1}_{x\geq 8}-(x+8)\textbf{1}_{x\leq -8})$ and the structured mesh is built with a stepsize of $10^{-3}$. 

\subsection{Method of reconstruction}

In the following, all the numerical measurements are generated following the process described in section \ref{6_num}. Using the previous results, we present all the steps to reconstruct $h^{\app}$, an approximation of $h$. 
\begin{enumerate}
\item Find an approximation of $h_{\min}$ and $h_{\max}$ using the method described in Appendix A. 
\item Choose a set of frequencies $\g k \subset K^\text{res}_N$ with a very small step of discretization $\rho_1$, and sources $f$, $b_\top$, $b_\bot$. For every frequency $k\in \g k$, measure the $N$-th modal component $u_{k,N}(x^{\text{meas}})$ of the wavefield $u$ solution of \eqref{6_eqmatlab}. 
\item Process the measurements by multiplying them by $1/q(k)$ and applying $\Phi^{-1}$. Then, get rid of the modulus $\pi$ by straightening up the sequence, and compute the approximate value of $\ell$ using \eqref{6_ellapp}. The available data $(d_i)$ are then the approximations of $\tint_{x_i}^{x^{\text{meas}}}k_{i,N}$. 
\item Reduce the number of frequencies $k_i$ used in $\g k$ and keep the associated data $d_i$, in order to have a bigger step size denoted $\rho_2\geq \rho_1$. Solve $TV=d$, where $T$ is defined in \eqref{6_M}, to find an approximation of the distance between every $x_i^\app$. 
\item Compute $h^\app$ using \eqref{6_happ}. 
\end{enumerate}

\begin{rem}
We propose here to choose two values of $\rho$: the first one is very small to ensure the existence of the left inverse of $\Phi$ in Proposition \ref{6_ti}. The second one is bigger due to the cutting of some frequencies, which improve the precision of the reconstruction in Theorem \ref{6_th2}. 
\end{rem}

\subsection{Numerical results}

We now apply this method to reconstruct different profiles of slowly increasing waveguides. We present in Figure \ref{6_recr} the reconstruction $h^\app$ obtained for different profiles $h$:
\begin{equation}\label{6_h1}
h_1(x)=0.1+\gamma_1\left(\frac{x^5}{5}-32\frac{x^3}{3}+256x\right)\textbf{1}_{\{-4\leq x\leq 4\}}-\gamma_2\textbf{1}_{\{x<-4\}}+\gamma_2\textbf{1}_{\{x>4\}},
\end{equation}
\begin{equation}\label{6_h2}
h_2(x)=0.1+\gamma_3\left(\frac{x^5}{5}-2x^4+16\frac{x^3}{3}\right)\left(\textbf{1}_{\{0\leq x\leq 4\}}-\textbf{1}_{\{-4\leq x<0\}}\right)+ \gamma_4\left(\textbf{1}_{\{x>4\}}-\textbf{1}_{\{x<-4\}}\right),
\end{equation}
\begin{equation}\label{6_h3}
h_3(x)=0.1+\gamma_5 x\textbf{1}_{\{-4\leq x\leq 4\}}+4\gamma_5\textbf{1}_{\{x>4\}}-4\gamma_4\textbf{1}_{\{x<-4\}},
\end{equation}
\begin{equation}\label{6_h4}
h_4(x)=0.1-4\gamma_5+4\gamma_5\frac{\sqrt{x+4}}{\sqrt{2}}\,\textbf{1}_{\{-4\leq x\leq 4\}}+8\gamma_5\textbf{1}_{\{x>4\}}.
\end{equation}
where $\gamma_1=3.10^{-6}$, $\gamma_2=8192/5.10^{-6}$, $\gamma_3=5.10^{-5}$, $\gamma_4=53/3.10^{-5}$, $\gamma_5=0.01/30$. All these profiles are represented in black in Figure \ref{6_recr}. The first two profiles are in $\mathcal{C}^2(\R)$ while $h_3$ and $h_4$ show corners, with an infinite derivative in $h_4$. Sources in \eqref{6_eqmatlab} are defined by 
\begin{equation}\label{6_sourceincr}
f(x,y)=\delta_6(x)y, \qquad b_{\top}(x)=\delta_6(x), \qquad b_{\bot}=0,
\end{equation}
the sets of frequencies are defined by 
\begin{equation}\label{6_Ki}
\g k_1=\{30.92:31.93:50\}, \quad \g k_2=\{30.9:31.95:50\}, \quad  \g k_3=\g k_4=\{31.01:31.83:50\},
\end{equation}
and data $(d_i)$ are reduced to use only $12$ frequencies for the inversion. The initial profiles are represented in black, while the reconstructions $h^{\app}$ are plotted in red and slightly shifted for comparison purposes. We see that the reconstruction are satisfactory and that the relative $L^\infty$ error of the reconstruction $E_\infty(h^\app):=\Vert h-h^\app\Vert_\infty/h_{\max}$ is of the same order as the one presented in \cite{niclas1} and even better than the one in \cite{bonnetier1}.

\begin{figure}[h]
\begin{center}
\scalebox{0.8}{\input{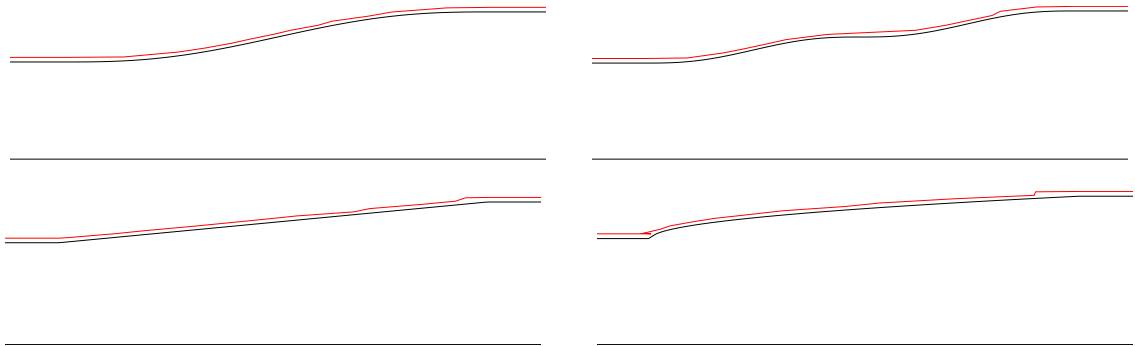}}\hspace{5mm} \scalebox{0.8}{
%
%
\begin{tikzpicture}

\begin{axis}[%
width=3.5in,
height=1in,
at={(0in,0in)},
scale only axis,
xmin=-5,
xmax=5,
ymin=0.092,
hide axis, 
ymax=0.102006666666667,
axis background/.style={fill=white},
legend style={legend cell align=left, align=left, draw=white!15!black}
]
\addplot [color=black]
  table[row sep=crcr]{%
-7	0.0982933333333333\\
-6.92964824120603	0.0982933333333333\\
-6.85929648241206	0.0982933333333333\\
-6.78894472361809	0.0982933333333333\\
-6.71859296482412	0.0982933333333333\\
-6.64824120603015	0.0982933333333333\\
-6.57788944723618	0.0982933333333333\\
-6.50753768844221	0.0982933333333333\\
-6.43718592964824	0.0982933333333333\\
-6.36683417085427	0.0982933333333333\\
-6.2964824120603	0.0982933333333333\\
-6.22613065326633	0.0982933333333333\\
-6.15577889447236	0.0982933333333333\\
-6.08542713567839	0.0982933333333333\\
-6.01507537688442	0.0982933333333333\\
-5.94472361809045	0.0982933333333333\\
-5.87437185929648	0.0982933333333333\\
-5.80402010050251	0.0982933333333333\\
-5.73366834170854	0.0982933333333333\\
-5.66331658291457	0.0982933333333333\\
-5.5929648241206	0.0982933333333333\\
-5.52261306532663	0.0982933333333333\\
-5.45226130653266	0.0982933333333333\\
-5.38190954773869	0.0982933333333333\\
-5.31155778894472	0.0982933333333333\\
-5.24120603015075	0.0982933333333333\\
-5.17085427135678	0.0982933333333333\\
-5.10050251256281	0.0982933333333333\\
-5.03015075376884	0.0982933333333333\\
-4.95979899497487	0.0982933333333333\\
-4.8894472361809	0.0982933333333333\\
-4.81909547738693	0.0982933333333333\\
-4.74874371859296	0.0982933333333333\\
-4.678391959799	0.0982933333333333\\
-4.60804020100502	0.0982933333333333\\
-4.53768844221106	0.0982933333333333\\
-4.46733668341709	0.0982933333333333\\
-4.39698492462312	0.0982933333333333\\
-4.32663316582915	0.0982933333333333\\
-4.25628140703518	0.0982933333333333\\
-4.18592964824121	0.0982933333333333\\
-4.11557788944724	0.0982933333333333\\
-4.04522613065327	0.0982933333333333\\
-3.9748743718593	0.0982933375233768\\
-3.90452261306533	0.098293557200071\\
-3.83417085427136	0.0982944750158659\\
-3.76381909547739	0.0982965427272223\\
-3.69346733668342	0.0983001581742027\\
-3.62311557788945	0.0983056673484978\\
-3.55276381909548	0.0983133664614529\\
-3.48241206030151	0.0983235040120937\\
-3.41206030150754	0.0983362828551527\\
-3.34170854271357	0.0983518622690955\\
-3.2713567839196	0.098370360024147\\
-3.20100502512563	0.0983918544503178\\
-3.13065326633166	0.0984163865054299\\
-3.06030150753769	0.0984439618431435\\
-2.98994974874372	0.0984745528809829\\
-2.91959798994975	0.098508100868363\\
-2.84924623115578	0.098544517954615\\
-2.77889447236181	0.0985836892570133\\
-2.70854271356784	0.0986254749288013\\
-2.63819095477387	0.0986697122272177\\
-2.5678391959799	0.0987162175815229\\
-2.49748743718593	0.0987647886610249\\
-2.42713567839196	0.0988152064431058\\
-2.35678391959799	0.098867237281248\\
-2.28643216080402	0.0989206349730604\\
-2.21608040201005	0.0989751428283045\\
-2.14572864321608	0.0990304957369206\\
-2.07537688442211	0.0990864222370546\\
-2.00502512562814	0.0991426465830833\\
-1.93467336683417	0.0991988908136414\\
-1.8643216080402	0.0992548768196474\\
-1.79396984924623	0.0993103284123298\\
-1.72361809045226	0.0993649733912534\\
-1.65326633165829	0.0994185456123455\\
-1.58291457286432	0.0994707870559224\\
-1.51256281407035	0.099521449894715\\
-1.44221105527638	0.0995702985618957\\
-1.37185929648241	0.0996171118191041\\
-1.30150753768844	0.0996616848244737\\
-1.23115577889447	0.0997038312006577\\
-1.1608040201005	0.0997433851028556\\
-1.09045226130653	0.0997802032868391\\
-1.02010050251256	0.0998141671769785\\
-0.949748743718593	0.0998451849342689\\
-0.879396984924623	0.0998731935243565\\
-0.809045226130653	0.0998981607855646\\
-0.738693467336684	0.0999200874969202\\
-0.668341708542713	0.0999390094461799\\
-0.597989949748744	0.0999549994978561\\
-0.527638190954774	0.0999681696612436\\
-0.457286432160804	0.0999786731584456\\
-0.386934673366834	0.0999867064923996\\
-0.316582914572864	0.0999925115149044\\
-0.246231155778895	0.0999963774946455\\
-0.175879396984925	0.0999986431852219\\
-0.105527638190955	0.0999996988931721\\
-0.0351758793969852	0.0999999885460004\\
0.0351758793969852	0.100000011454\\
0.105527638190955	0.100000301106828\\
0.175879396984925	0.100001356814778\\
0.246231155778895	0.100003622505355\\
0.316582914572864	0.100007488485096\\
0.386934673366834	0.1000132935076\\
0.457286432160804	0.100021326841554\\
0.527638190954774	0.100031830338756\\
0.597989949748744	0.100045000502144\\
0.668341708542713	0.10006099055382\\
0.738693467336684	0.10007991250308\\
0.809045226130653	0.100101839214435\\
0.879396984924623	0.100126806475644\\
0.949748743718593	0.100154815065731\\
1.02010050251256	0.100185832823021\\
1.09045226130653	0.100219796713161\\
1.1608040201005	0.100256614897144\\
1.23115577889447	0.100296168799342\\
1.30150753768844	0.100338315175526\\
1.37185929648241	0.100382888180896\\
1.44221105527638	0.100429701438104\\
1.51256281407035	0.100478550105285\\
1.58291457286432	0.100529212944078\\
1.65326633165829	0.100581454387654\\
1.72361809045226	0.100635026608747\\
1.79396984924623	0.10068967158767\\
1.8643216080402	0.100745123180353\\
1.93467336683417	0.100801109186359\\
2.00502512562814	0.100857353416917\\
2.07537688442211	0.100913577762945\\
2.14572864321608	0.100969504263079\\
2.21608040201005	0.101024857171696\\
2.28643216080402	0.10107936502694\\
2.35678391959799	0.101132762718752\\
2.42713567839196	0.101184793556894\\
2.49748743718593	0.101235211338975\\
2.5678391959799	0.101283782418477\\
2.63819095477387	0.101330287772782\\
2.70854271356784	0.101374525071199\\
2.77889447236181	0.101416310742987\\
2.84924623115578	0.101455482045385\\
2.91959798994975	0.101491899131637\\
2.98994974874372	0.101525447119017\\
3.06030150753769	0.101556038156856\\
3.13065326633166	0.10158361349457\\
3.20100502512563	0.101608145549682\\
3.2713567839196	0.101629639975853\\
3.34170854271357	0.101648137730905\\
3.41206030150754	0.101663717144847\\
3.48241206030151	0.101676495987906\\
3.55276381909548	0.101686633538547\\
3.62311557788945	0.101694332651502\\
3.69346733668342	0.101699841825797\\
3.76381909547739	0.101703457272778\\
3.83417085427136	0.101705524984134\\
3.90452261306533	0.101706442799929\\
3.9748743718593	0.101706662476623\\
4.04522613065327	0.101706666666667\\
4.11557788944724	0.101706666666667\\
4.18592964824121	0.101706666666667\\
4.25628140703518	0.101706666666667\\
4.32663316582915	0.101706666666667\\
4.39698492462312	0.101706666666667\\
4.46733668341709	0.101706666666667\\
4.53768844221105	0.101706666666667\\
4.60804020100502	0.101706666666667\\
4.678391959799	0.101706666666667\\
4.74874371859296	0.101706666666667\\
4.81909547738693	0.101706666666667\\
4.8894472361809	0.101706666666667\\
4.95979899497488	0.101706666666667\\
5.03015075376884	0.101706666666667\\
5.10050251256281	0.101706666666667\\
5.17085427135678	0.101706666666667\\
5.24120603015075	0.101706666666667\\
5.31155778894472	0.101706666666667\\
5.38190954773869	0.101706666666667\\
5.45226130653266	0.101706666666667\\
5.52261306532663	0.101706666666667\\
5.5929648241206	0.101706666666667\\
5.66331658291457	0.101706666666667\\
5.73366834170854	0.101706666666667\\
5.80402010050251	0.101706666666667\\
5.87437185929648	0.101706666666667\\
5.94472361809045	0.101706666666667\\
6.01507537688442	0.101706666666667\\
6.08542713567839	0.101706666666667\\
6.15577889447236	0.101706666666667\\
6.22613065326633	0.101706666666667\\
6.2964824120603	0.101706666666667\\
6.36683417085427	0.101706666666667\\
6.43718592964824	0.101706666666667\\
6.50753768844221	0.101706666666667\\
6.57788944723618	0.101706666666667\\
6.64824120603015	0.101706666666667\\
6.71859296482412	0.101706666666667\\
6.78894472361809	0.101706666666667\\
6.85929648241206	0.101706666666667\\
6.92964824120603	0.101706666666667\\
7	0.101706666666667\\
};

\addplot [color=red]
  table[row sep=crcr]{%
7	0.102006666666667\\
4	0.102006666666667\\
3.30032001224299	0.101969665164718\\
2.62571952909924	0.101688421277679\\
2.4646892756011	0.101408729081051\\
1.98995235151381	0.101061276403796\\
1.6242940906856	0.100785028901661\\
1.03775307586069	0.100441842327543\\
-0.617986569571076	0.100168976272155\\
-1.38506262090257	0.0998299777104709\\
-1.72987719300034	0.0995604313930424\\
-2.17096005792539	0.0992255446474519\\
-2.5550653137523	0.098959257851631\\
-3.22827538066648	0.0986284085630608\\
-4	0.0985933333333333\\
-7	0.0985933333333333\\
};

\addplot [color=black]
  table[row sep=crcr]{%
-7	0.092\\
7	0.092\\
};
\end{axis}

\end{tikzpicture}
\scalebox{0.8}{
%
%
\begin{tikzpicture}

\begin{axis}[%
width=3.5in,
height=1in,
at={(0in,0in)},
scale only axis,
xmin=-5,
xmax=5,
ymin=0.092,
ymax=0.102,
hide axis,
axis background/.style={fill=white},
legend style={legend cell align=left, align=left, draw=white!15!black}
]
\addplot [color=black]
  table[row sep=crcr]{%
-7	0.0986666666666667\\
-6.92964824120603	0.0986666666666667\\
-6.85929648241206	0.0986666666666667\\
-6.78894472361809	0.0986666666666667\\
-6.71859296482412	0.0986666666666667\\
-6.64824120603015	0.0986666666666667\\
-6.57788944723618	0.0986666666666667\\
-6.50753768844221	0.0986666666666667\\
-6.43718592964824	0.0986666666666667\\
-6.36683417085427	0.0986666666666667\\
-6.2964824120603	0.0986666666666667\\
-6.22613065326633	0.0986666666666667\\
-6.15577889447236	0.0986666666666667\\
-6.08542713567839	0.0986666666666667\\
-6.01507537688442	0.0986666666666667\\
-5.94472361809045	0.0986666666666667\\
-5.87437185929648	0.0986666666666667\\
-5.80402010050251	0.0986666666666667\\
-5.73366834170854	0.0986666666666667\\
-5.66331658291457	0.0986666666666667\\
-5.5929648241206	0.0986666666666667\\
-5.52261306532663	0.0986666666666667\\
-5.45226130653266	0.0986666666666667\\
-5.38190954773869	0.0986666666666667\\
-5.31155778894472	0.0986666666666667\\
-5.24120603015075	0.0986666666666667\\
-5.17085427135678	0.0986666666666667\\
-5.10050251256281	0.0986666666666667\\
-5.03015075376884	0.0986666666666667\\
-4.95979899497487	0.0986666666666667\\
-4.8894472361809	0.0986666666666667\\
-4.81909547738693	0.0986666666666667\\
-4.74874371859296	0.0986666666666667\\
-4.678391959799	0.0986666666666667\\
-4.60804020100502	0.0986666666666667\\
-4.53768844221106	0.0986666666666667\\
-4.46733668341709	0.0986666666666667\\
-4.39698492462312	0.0986666666666667\\
-4.32663316582915	0.0986666666666667\\
-4.25628140703518	0.0986666666666667\\
-4.18592964824121	0.0986666666666667\\
-4.11557788944724	0.0986666666666667\\
-4.04522613065327	0.0986666666666667\\
-3.9748743718593	0.0986750418760469\\
-3.90452261306533	0.0986984924623116\\
-3.83417085427136	0.0987219430485762\\
-3.76381909547739	0.0987453936348409\\
-3.69346733668342	0.0987688442211055\\
-3.62311557788945	0.0987922948073702\\
-3.55276381909548	0.0988157453936348\\
-3.48241206030151	0.0988391959798995\\
-3.41206030150754	0.0988626465661642\\
-3.34170854271357	0.0988860971524288\\
-3.2713567839196	0.0989095477386935\\
-3.20100502512563	0.0989329983249581\\
-3.13065326633166	0.0989564489112228\\
-3.06030150753769	0.0989798994974874\\
-2.98994974874372	0.0990033500837521\\
-2.91959798994975	0.0990268006700168\\
-2.84924623115578	0.0990502512562814\\
-2.77889447236181	0.0990737018425461\\
-2.70854271356784	0.0990971524288107\\
-2.63819095477387	0.0991206030150754\\
-2.5678391959799	0.09914405360134\\
-2.49748743718593	0.0991675041876047\\
-2.42713567839196	0.0991909547738694\\
-2.35678391959799	0.099214405360134\\
-2.28643216080402	0.0992378559463987\\
-2.21608040201005	0.0992613065326633\\
-2.14572864321608	0.099284757118928\\
-2.07537688442211	0.0993082077051926\\
-2.00502512562814	0.0993316582914573\\
-1.93467336683417	0.0993551088777219\\
-1.8643216080402	0.0993785594639866\\
-1.79396984924623	0.0994020100502513\\
-1.72361809045226	0.0994254606365159\\
-1.65326633165829	0.0994489112227806\\
-1.58291457286432	0.0994723618090452\\
-1.51256281407035	0.0994958123953099\\
-1.44221105527638	0.0995192629815745\\
-1.37185929648241	0.0995427135678392\\
-1.30150753768844	0.0995661641541039\\
-1.23115577889447	0.0995896147403685\\
-1.1608040201005	0.0996130653266332\\
-1.09045226130653	0.0996365159128978\\
-1.02010050251256	0.0996599664991625\\
-0.949748743718593	0.0996834170854271\\
-0.879396984924623	0.0997068676716918\\
-0.809045226130653	0.0997303182579565\\
-0.738693467336684	0.0997537688442211\\
-0.668341708542713	0.0997772194304858\\
-0.597989949748744	0.0998006700167504\\
-0.527638190954774	0.0998241206030151\\
-0.457286432160804	0.0998475711892797\\
-0.386934673366834	0.0998710217755444\\
-0.316582914572864	0.0998944723618091\\
-0.246231155778895	0.0999179229480737\\
-0.175879396984925	0.0999413735343384\\
-0.105527638190955	0.099964824120603\\
-0.0351758793969852	0.0999882747068677\\
0.0351758793969852	0.100011725293132\\
0.105527638190955	0.100035175879397\\
0.175879396984925	0.100058626465662\\
0.246231155778895	0.100082077051926\\
0.316582914572864	0.100105527638191\\
0.386934673366834	0.100128978224456\\
0.457286432160804	0.10015242881072\\
0.527638190954774	0.100175879396985\\
0.597989949748744	0.10019932998325\\
0.668341708542713	0.100222780569514\\
0.738693467336684	0.100246231155779\\
0.809045226130653	0.100269681742044\\
0.879396984924623	0.100293132328308\\
0.949748743718593	0.100316582914573\\
1.02010050251256	0.100340033500838\\
1.09045226130653	0.100363484087102\\
1.1608040201005	0.100386934673367\\
1.23115577889447	0.100410385259632\\
1.30150753768844	0.100433835845896\\
1.37185929648241	0.100457286432161\\
1.44221105527638	0.100480737018425\\
1.51256281407035	0.10050418760469\\
1.58291457286432	0.100527638190955\\
1.65326633165829	0.100551088777219\\
1.72361809045226	0.100574539363484\\
1.79396984924623	0.100597989949749\\
1.8643216080402	0.100621440536013\\
1.93467336683417	0.100644891122278\\
2.00502512562814	0.100668341708543\\
2.07537688442211	0.100691792294807\\
2.14572864321608	0.100715242881072\\
2.21608040201005	0.100738693467337\\
2.28643216080402	0.100762144053601\\
2.35678391959799	0.100785594639866\\
2.42713567839196	0.100809045226131\\
2.49748743718593	0.100832495812395\\
2.5678391959799	0.10085594639866\\
2.63819095477387	0.100879396984925\\
2.70854271356784	0.100902847571189\\
2.77889447236181	0.100926298157454\\
2.84924623115578	0.100949748743719\\
2.91959798994975	0.100973199329983\\
2.98994974874372	0.100996649916248\\
3.06030150753769	0.101020100502513\\
3.13065326633166	0.101043551088777\\
3.20100502512563	0.101067001675042\\
3.2713567839196	0.101090452261307\\
3.34170854271357	0.101113902847571\\
3.41206030150754	0.101137353433836\\
3.48241206030151	0.101160804020101\\
3.55276381909548	0.101184254606365\\
3.62311557788945	0.10120770519263\\
3.69346733668342	0.101231155778894\\
3.76381909547739	0.101254606365159\\
3.83417085427136	0.101278056951424\\
3.90452261306533	0.101301507537688\\
3.9748743718593	0.101324958123953\\
4.04522613065327	0.101333333333333\\
4.11557788944724	0.101333333333333\\
4.18592964824121	0.101333333333333\\
4.25628140703518	0.101333333333333\\
4.32663316582915	0.101333333333333\\
4.39698492462312	0.101333333333333\\
4.46733668341709	0.101333333333333\\
4.53768844221105	0.101333333333333\\
4.60804020100502	0.101333333333333\\
4.678391959799	0.101333333333333\\
4.74874371859296	0.101333333333333\\
4.81909547738693	0.101333333333333\\
4.8894472361809	0.101333333333333\\
4.95979899497488	0.101333333333333\\
5.03015075376884	0.101333333333333\\
5.10050251256281	0.101333333333333\\
5.17085427135678	0.101333333333333\\
5.24120603015075	0.101333333333333\\
5.31155778894472	0.101333333333333\\
5.38190954773869	0.101333333333333\\
5.45226130653266	0.101333333333333\\
5.52261306532663	0.101333333333333\\
5.5929648241206	0.101333333333333\\
5.66331658291457	0.101333333333333\\
5.73366834170854	0.101333333333333\\
5.80402010050251	0.101333333333333\\
5.87437185929648	0.101333333333333\\
5.94472361809045	0.101333333333333\\
6.01507537688442	0.101333333333333\\
6.08542713567839	0.101333333333333\\
6.15577889447236	0.101333333333333\\
6.22613065326633	0.101333333333333\\
6.2964824120603	0.101333333333333\\
6.36683417085427	0.101333333333333\\
6.43718592964824	0.101333333333333\\
6.50753768844221	0.101333333333333\\
6.57788944723618	0.101333333333333\\
6.64824120603015	0.101333333333333\\
6.71859296482412	0.101333333333333\\
6.78894472361809	0.101333333333333\\
6.85929648241206	0.101333333333333\\
6.92964824120603	0.101333333333333\\
7	0.101333333333333\\
};

\addplot [color=red]
  table[row sep=crcr]{%
7	0.101633333333333\\
4	0.101633333333333\\
3.59625102810985	0.101609018174453\\
3.41548844755507	0.101390801648246\\
2.7652856058328	0.101173523164968\\
1.80761962509402	0.100903234993889\\
1.50776337803909	0.100688045118395\\
0.435694961237349	0.100420348888087\\
-0.149891626723235	0.100207217649094\\
-0.914718615089875	0.0999420762542154\\
-1.53228016373605	0.0997309742511577\\
-2.36459083617067	0.0994683512912535\\
-2.95063428038811	0.09925924968076\\
-3.84473316177099	0.0989991094436002\\
-4	0.0989666666666667\\
-7	0.0989666666666667\\
};

\addplot [color=black]
  table[row sep=crcr]{%
-7	0.092\\
7	0.092\\
};

\end{axis}

\end{tikzpicture}
%
%
\begin{tikzpicture}

\begin{axis}[%
width=3.5in,
height=1in,
at={(0in,0in)},
scale only axis,
xmin=-5,
xmax=5,
ymin=0.092,
ymax=0.101609018174453,
axis background/.style={fill=white},
axis x line*=bottom,
axis y line*=left,
hide axis,
legend style={legend cell align=left, align=left, draw=white!15!black}
]
\addplot [color=black]
  table[row sep=crcr]{%
-7	0.0986666666666667\\
-6.92964824120603	0.0986666666666667\\
-6.85929648241206	0.0986666666666667\\
-6.78894472361809	0.0986666666666667\\
-6.71859296482412	0.0986666666666667\\
-6.64824120603015	0.0986666666666667\\
-6.57788944723618	0.0986666666666667\\
-6.50753768844221	0.0986666666666667\\
-6.43718592964824	0.0986666666666667\\
-6.36683417085427	0.0986666666666667\\
-6.2964824120603	0.0986666666666667\\
-6.22613065326633	0.0986666666666667\\
-6.15577889447236	0.0986666666666667\\
-6.08542713567839	0.0986666666666667\\
-6.01507537688442	0.0986666666666667\\
-5.94472361809045	0.0986666666666667\\
-5.87437185929648	0.0986666666666667\\
-5.80402010050251	0.0986666666666667\\
-5.73366834170854	0.0986666666666667\\
-5.66331658291457	0.0986666666666667\\
-5.5929648241206	0.0986666666666667\\
-5.52261306532663	0.0986666666666667\\
-5.45226130653266	0.0986666666666667\\
-5.38190954773869	0.0986666666666667\\
-5.31155778894472	0.0986666666666667\\
-5.24120603015075	0.0986666666666667\\
-5.17085427135678	0.0986666666666667\\
-5.10050251256281	0.0986666666666667\\
-5.03015075376884	0.0986666666666667\\
-4.95979899497487	0.0986666666666667\\
-4.8894472361809	0.0986666666666667\\
-4.81909547738693	0.0986666666666667\\
-4.74874371859296	0.0986666666666667\\
-4.678391959799	0.0986666666666667\\
-4.60804020100502	0.0986666666666667\\
-4.53768844221106	0.0986666666666667\\
-4.46733668341709	0.0986666666666667\\
-4.39698492462312	0.0986666666666667\\
-4.32663316582915	0.0986666666666667\\
-4.25628140703518	0.0986666666666667\\
-4.18592964824121	0.0986666666666667\\
-4.11557788944724	0.0986666666666667\\
-4.04522613065327	0.0986666666666667\\
-3.9748743718593	0.0988161119465541\\
-3.90452261306533	0.0989579891484397\\
-3.83417085427136	0.0990505985422542\\
-3.76381909547739	0.0991248572185093\\
-3.69346733668342	0.0991886565885077\\
-3.62311557788945	0.0992454657468399\\
-3.55276381909548	0.0992971769932396\\
-3.48241206030151	0.0993449572892031\\
-3.41206030150754	0.099389586486098\\
-3.34170854271357	0.0994316163133967\\
-3.2713567839196	0.0994714541285087\\
-3.20100502512563	0.0995094108481618\\
-3.13065326633166	0.0995457301752652\\
-3.06030150753769	0.0995806073369814\\
-2.98994974874372	0.0996142015977173\\
-2.91959798994975	0.0996466449022754\\
-2.84924623115578	0.0996780480185674\\
-2.77889447236181	0.0997085050109785\\
-2.70854271356784	0.0997380965674927\\
-2.63819095477387	0.0997668925203394\\
-2.5678391959799	0.0997949537869493\\
-2.49748743718593	0.0998223338862854\\
-2.42713567839196	0.0998490801388527\\
-2.35678391959799	0.0998752346274835\\
-2.28643216080402	0.0999008349747263\\
-2.21608040201005	0.0999259149778922\\
-2.14572864321608	0.0999505051323741\\
-2.07537688442211	0.0999746330663635\\
-2.00502512562814	0.0999983239046381\\
-1.93467336683417	0.100021600575072\\
-1.8643216080402	0.100044484068521\\
-1.79396984924623	0.100066993660465\\
-1.72361809045226	0.100089147101072\\
-1.65326633165829	0.100110960779017\\
-1.58291457286432	0.100132449863349\\
-1.51256281407035	0.100153628426895\\
-1.44221105527638	0.100174509554068\\
-1.37185929648241	0.100195105435409\\
-1.30150753768844	0.10021542745081\\
-1.23115577889447	0.100235486243038\\
-1.1608040201005	0.100255291782895\\
-1.09045226130653	0.100274853427157\\
-1.02010050251256	0.100294179970244\\
-0.949748743718593	0.10031327969043\\
-0.879396984924623	0.100332160391282\\
-0.809045226130653	0.100350829438919\\
-0.738693467336684	0.10036929379559\\
-0.668341708542713	0.100387560050015\\
-0.597989949748744	0.100405634444849\\
-0.527638190954774	0.100423522901611\\
-0.457286432160804	0.100441231043343\\
-0.386934673366834	0.10045876421526\\
-0.316582914572864	0.100476127503596\\
-0.246231155778895	0.100493325752836\\
-0.175879396984925	0.100510363581507\\
-0.105527638190955	0.100527245396658\\
-0.0351758793969852	0.100543975407179\\
0.0351758793969852	0.100560557636049\\
0.105527638190955	0.10057699593164\\
0.175879396984925	0.100593293978141\\
0.246231155778895	0.100609455305203\\
0.316582914572864	0.100625483296862\\
0.386934673366834	0.100641381199818\\
0.457286432160804	0.100657152131106\\
0.527638190954774	0.100672799085239\\
0.597989949748744	0.100688324940845\\
0.668341708542713	0.100703732466846\\
0.738693467336684	0.100719024328227\\
0.809045226130653	0.100734203091418\\
0.879396984924623	0.100749271229318\\
0.949748743718593	0.100764231126006\\
1.02010050251256	0.100779085081136\\
1.09045226130653	0.10079383531407\\
1.1608040201005	0.100808483967745\\
1.23115577889447	0.100823033112312\\
1.30150753768844	0.100837484748545\\
1.37185929648241	0.100851840811058\\
1.44221105527638	0.100866103171325\\
1.51256281407035	0.100880273640528\\
1.58291457286432	0.100894353972245\\
1.65326633165829	0.100908345864978\\
1.72361809045226	0.100922250964547\\
1.79396984924623	0.100936070866353\\
1.8643216080402	0.100949807117506\\
1.93467336683417	0.100963461218855\\
2.00502512562814	0.100977034626899\\
2.07537688442211	0.100990528755599\\
2.14572864321608	0.101003944978097\\
2.21608040201005	0.10101728462835\\
2.28643216080402	0.101030549002675\\
2.35678391959799	0.101043739361222\\
2.42713567839196	0.101056856929366\\
2.49748743718593	0.101069902899044\\
2.5678391959799	0.101082878430009\\
2.63819095477387	0.101095784651043\\
2.70854271356784	0.101108622661096\\
2.77889447236181	0.101121393530379\\
2.84924623115578	0.101134098301411\\
2.91959798994975	0.101146737990005\\
2.98994974874372	0.101159313586218\\
3.06030150753769	0.10117182605526\\
3.13065326633166	0.101184276338352\\
3.20100502512563	0.101196665353557\\
3.2713567839196	0.10120899399657\\
3.34170854271357	0.10122126314147\\
3.41206030150754	0.101233473641448\\
3.48241206030151	0.101245626329494\\
3.55276381909548	0.101257722019066\\
3.62311557788945	0.10126976150472\\
3.69346733668342	0.101281745562725\\
3.76381909547739	0.101293674951639\\
3.83417085427136	0.101305550412878\\
3.90452261306533	0.101317372671248\\
3.9748743718593	0.101329142435464\\
4.04522613065327	0.101333333333333\\
4.11557788944724	0.101333333333333\\
4.18592964824121	0.101333333333333\\
4.25628140703518	0.101333333333333\\
4.32663316582915	0.101333333333333\\
4.39698492462312	0.101333333333333\\
4.46733668341709	0.101333333333333\\
4.53768844221105	0.101333333333333\\
4.60804020100502	0.101333333333333\\
4.678391959799	0.101333333333333\\
4.74874371859296	0.101333333333333\\
4.81909547738693	0.101333333333333\\
4.8894472361809	0.101333333333333\\
4.95979899497488	0.101333333333333\\
5.03015075376884	0.101333333333333\\
5.10050251256281	0.101333333333333\\
5.17085427135678	0.101333333333333\\
5.24120603015075	0.101333333333333\\
5.31155778894472	0.101333333333333\\
5.38190954773869	0.101333333333333\\
5.45226130653266	0.101333333333333\\
5.52261306532663	0.101333333333333\\
5.5929648241206	0.101333333333333\\
5.66331658291457	0.101333333333333\\
5.73366834170854	0.101333333333333\\
5.80402010050251	0.101333333333333\\
5.87437185929648	0.101333333333333\\
5.94472361809045	0.101333333333333\\
6.01507537688442	0.101333333333333\\
6.08542713567839	0.101333333333333\\
6.15577889447236	0.101333333333333\\
6.22613065326633	0.101333333333333\\
6.2964824120603	0.101333333333333\\
6.36683417085427	0.101333333333333\\
6.43718592964824	0.101333333333333\\
6.50753768844221	0.101333333333333\\
6.57788944723618	0.101333333333333\\
6.64824120603015	0.101333333333333\\
6.71859296482412	0.101333333333333\\
6.78894472361809	0.101333333333333\\
6.85929648241206	0.101333333333333\\
6.92964824120603	0.101333333333333\\
7	0.101333333333333\\
};

\addplot [color=red]
  table[row sep=crcr]{%
7	0.101633333333333\\
4	0.101633333333333\\
3.18774690884605	0.101609018174453\\
3.16621107781644	0.101390801648246\\
1.6776091242509	0.101173523164968\\
0.264697828060604	0.100903234993889\\
-0.360368972983363	0.100688045118395\\
-1.52710768338474	0.100420348888087\\
-2.08886920224065	0.100207217649094\\
-2.80500945817267	0.0999420762542154\\
-3.20232441848386	0.0997309742511577\\
-3.64121454533332	0.0994683512912535\\
-3.82379587475058	0.09925924968076\\
-4.14534140934738	0.0989991094436002\\
-4	0.0989666666666667\\
-7	0.0989666666666667\\
};

\addplot[color=black] 
table[row sep=crcr]{
-7 0.092 \\
7 0.092 \\};

\end{axis}

\end{tikzpicture}
\end{center}
\caption{\label{6_recr} Reconstruction of four different increasing profiles. In black, the initial shape of $\Omega_5$, and in red, the reconstruction, slightly shifted for comparison purposes. In each case, $\g k=\g k_i$ is defined in \eqref{6_Ki}, $I$ is equal to $50$ for the post processing of the measurements and then to $12$ for the inversion of the data, $h=h_i$ is defined in \eqref{6_h1}, \eqref{6_h2}, \eqref{6_h3}, \eqref{6_h4}, and the sources of \eqref{6_eqmatlab} are defined in \eqref{6_sourceincr}. Top left: $i=1$, $E_\infty(h^\app)=0.97 \%$. Top right: $i=2$, $E_\infty(h^\app)=1.0 \%$. Bottom left: $i=3$, $E_\infty(h^\app)=1.1 \%$. Bottom right: $i=4$, $E_\infty(h^\app)=1.5 \%$.}
\end{figure}

In Figure \ref{6_bruit}, we illustrate the stability of the recovery by adding some artificial noise on measured data. We notice that the recovery is still satisfactory as long as the amplitude of the noise is small, and that the reconstruction error grows until it reaches a saturation error close to $100\%$.  

\begin{figure}[h]
\begin{center}
%
%
\definecolor{mycolor1}{rgb}{0.00000,0.44700,0.74100}%
\begin{tikzpicture}

\begin{loglogaxis}[%
width=8cm,
height=3cm,
scale only axis,
xmin= 0.00012341,
xmax= 54.7,
ymin=0,
xlabel={standart deviation of the noise $\sigma$},
ylabel={$E_\infty(h^\app)$},
grid=major,
ymax=1.2,
axis background/.style={fill=white},
legend style={legend cell align=left, align=left, draw=white!15!black}
]
\addplot [color=mycolor1]
  table[row sep=crcr]{%
0.00012340980408668	0.0202788233483712\\
0.000193211688616399	0.0207303373181742\\
0.000302494253955545	0.0210365695693689\\
0.000473588188848093	0.021598458330725\\
0.000741454654703551	0.0216746388260744\\
0.00116082921391838	0.0234209294264923\\
0.00181740643927192	0.0274711430065962\\
0.0028453506561554	0.0347095307243516\\
0.00445471094497023	0.0382357057129508\\
0.00697434235752547	0.0518770664890229\\
0.0109191038253323	0.0723565322552485\\
0.0170950639123326	0.079605242927591\\
0.0267642120490453	0.12663523022934\\
0.0419023321749326	0.146561392625418\\
0.0656027324279484	0.205486743975215\\
0.102708328597224	0.253038463073329\\
0.160801240631574	0.340494546280243\\
0.25175211535233	0.453157259857477\\
0.394145762404821	0.547102827440753\\
0.617078755442682	0.651946835741835\\
0.966104996525596	0.770722046705654\\
1.51254415433918	0.881817983805031\\
2.36805505307725	0.962276205444709\\
3.70745192351404	1.02174432596336\\
5.80442576590703	1.0574747367799\\
9.08747009185537	1.06772634630059\\
14.2274388545756	1.07063453442849\\
22.2746280664084	1.08391994754228\\
34.8733922224703	1.08252881280481\\
54.5981500331442	1.08234623495354\\
};

\end{loglogaxis}
\end{tikzpicture}%
\end{center}
\caption{\label{6_bruit} Relative reconstruction error $E_\infty(h^\app)$ with increasing additive noise on measured data. Given a standard deviation $\sigma$, we apply the inversion method with data $u_N(x^{\text{meas}})+\mathcal{N}$ where $\mathcal{N}$ is a uniform random Gaussian noise of standard deviation $\sigma$. The width profile $h$ is defined in \eqref{6_h4}, sources of \eqref{6_eqmatlab} are defined in \eqref{6_sourceincr}, $\g k$ is defined in \eqref{6_Ki} and $I=50$ for the post processing an $12$ for the inversion. }
\end{figure}
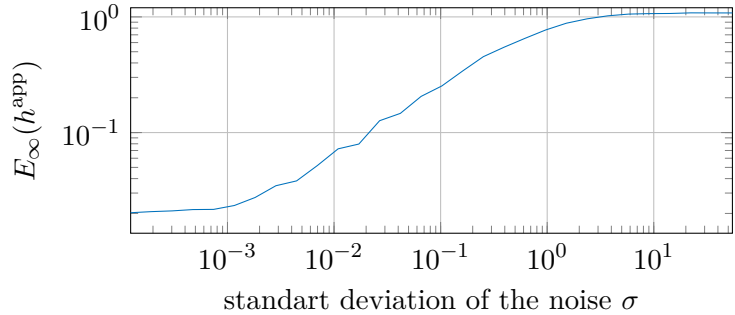

Finally, we illustrate in Figure \ref{6_recr5} that this method of reconstruction only works for increasing functions. We define 
\begin{equation}\label{6_h6}
h_6(x)=0.1-\gamma_7(x+5)\textbf{1}_{\{-5\leq x\leq 0\}} +\frac{\gamma_6}{4}(x-4)\textbf{1}_{\{0<x\leq 4\}},
\end{equation}
where $\gamma_6=25.10^{-4}$, $\gamma_7=5.10^{-4}$. The reconstruction of this function shows that if the width profile is not monotonous, we are only able to reconstruct the increasing part of the profile. To get the other part, one needs to move both the source and the receiver. 

\begin{figure}[h]
\begin{center}
\begin{tikzpicture}
\begin{axis}[width=5.5cm, height=2cm, axis on top, scale only axis, xmin=-7, xmax=7, ymin=0, ymax=0.1, colorbar left,axis y line=right,point meta min=0,point meta max=0.11, title={$|u|$},axis line style={draw=none},tick style={draw=none}, xlabel={$x$},ylabel={$y$}]
\addplot graphics [xmin=-7,xmax=7,ymin=0,ymax=0.1]{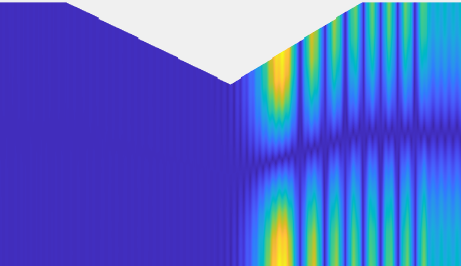};
\end{axis} 
\end{tikzpicture}
\hspace{2mm}
\raisebox{0.9cm}{\input{recr5}}
\end{center}
\caption{\label{6_recr5} Reconstruction of a non monotonous profile. On the left, we represent the wavefield $|u|$ at frequency $k=32.1$. On the right, we show the reconstruction of the width profile. In black is the initial shape of $\Omega_5$, and in red is the reconstruction slightly shifted for comparison purposes. We notice that we cannot reconstruct the left variations of the waveguide, which is explained by the fact that the wavefield $u$ never propagates in the left area of the waveguide. In the reconstruction, $\g k=\{31.42:32.1:50\}$, we only select $12$ frequencies for the inversion of data, $h$ is defined in \eqref{6_h6} and the sources of \eqref{6_eqmatlab} are defined in \eqref{6_sourceincr}.}
\end{figure}

To conclude, using a known source located far away at the right of the defect, we are able to reconstruct slowly increasing widths. Even if this method does not work for general shape defects, it is relevant to identify and localize obstruction types of defects and gives good numerically results. 

\section{Conclusion}

From the same analysis of the forward problem than in \cite{niclas1}, we have developed a new method to recover width defect using section measurements. We have used the study of the forward problem in slowly varying waveguide presented in \cite{bonnetier2} and the approximation of the solutions using the $\pi$-periodic function $\Phi$ to develop an inverse reconstruction method in slowly varying monotonous waveguides. Given wavefield measurements on a section of the waveguide for different locally resonant frequencies, we reconstruct with stability the associated resonant points which provides a good approximation of the width in the waveguide. 

Even if this method only works when sources and receivers are on the larger side of the waveguide, its reconstruction results are excellent and we believe that this method could be useful to detect and localize precisely the position of width defects in waveguides. Combined with a traditional multi-frequency back-scattering method like the one presented in \cite{bonnetier2}, this method could produce great results in recovering any type of width defect as long as there are small and slowly varying. 

\section*{Appendix A: Identification of $k_{\min}$ and $k_{\max}$}

Giving a compactly perturbed waveguide $\Omega$, we describe here how section measurements enable to approximate very precisely the quantities $k_{\min}$ and $k_{\max}$. The article \cite{bourgeois1} mentions that the problem \eqref{6_eqmatlab} is not well-defined when $k_n(x)=0$ in a non-trivial interval, which especially happens when $k=n\pi/h_{\min}$ or $k=n\pi/h_{\max}$. Numerically, this results in an explosion of the solution when $k$ tends to $n\pi/h_{\min}$ (resp. $n\pi/h_{\max}$) with a source term located in the area where $h(x)=h_{\min}$ (resp. $h(x)=h_{\max}$). Since our sources are located in the area $k=n\pi/h_{\max}$, measurements of the wavefield enable to find a good approximation of $h_{\max}$. 

Moreover, using Theorem \ref{6_th1}, we see that the Green function changes behavior when the mode $N$ switches from an evanescent mode to a locally resonant one. This change appends exactly around the frequency $N\pi/h_{\min}$ and comparing the measurements with the known behavior of evanescent modes, we can get a good approximation of $h_{\min}$. We illustrate it in Figure \ref{6_explosion}. 

\begin{figure}[h]
\begin{center}
%
%
\definecolor{mycolor1}{rgb}{0.00000,0.44700,0.74100}%
\definecolor{mycolor2}{rgb}{0.85000,0.32500,0.09800}%
\begin{tikzpicture}

\begin{axis}[%
width=12cm,
height=3cm,
at={(0in,0in)},
xlabel={$k$},
scale only axis,
grid=major,
xmin=29.5,
xmax=33.5,
ymin=0,
ymax=1,
axis background/.style={fill=white},
legend style={legend cell align=left, align=left, draw=white!15!black}
]
\addplot [color=mycolor1]
  table[row sep=crcr]{%
25	0.0252919402862436\\
25.3052631578947	0.0259317992865948\\
25.6105263157895	0.0266279914941542\\
25.9157894736842	0.0273888513479652\\
26.2210526315789	0.0282245484398488\\
26.5263157894737	0.0291476384516376\\
26.8315789473684	0.0301738424872994\\
27.1368421052632	0.0313231422094338\\
27.4421052631579	0.0326213871829885\\
27.7473684210526	0.0341027201793938\\
28.0526315789474	0.0358133295069878\\
28.3578947368421	0.0378175284570629\\
28.6631578947368	0.0402080525630839\\
28.9684210526316	0.043124524717331\\
29.2736842105263	0.0467891052744052\\
29.5789473684211	0.051582027286353\\
29.8842105263158	0.0582233960140804\\
30.1894736842105	0.0682997309894849\\
30.4947368421053	0.0863255305226047\\
30.8	0.135052548942379\\
30.9	0.185247723075446\\
30.9224489795918	0.206688882201917\\
30.9448979591837	0.237739788166959\\
30.9673469387755	0.288666105301945\\
30.9897959183673	0.396525104363943\\
31.0122448979592	0.959415890283245\\
31.034693877551	0.182415453842936\\
31.0571428571429	0.649034563872837\\
31.0795918367347	0.0720326418553468\\
31.1020408163265	0.40400119996645\\
31.1244897959184	0.039127330634189\\
31.1469387755102	0.282542766931676\\
31.169387755102	0.0448867070351987\\
31.1918367346939	0.398683519752413\\
31.2142857142857	0.101082404860801\\
31.2367346938776	0.212179572856249\\
31.2591836734694	0.145528603318402\\
31.2816326530612	0.163858830219906\\
31.3040816326531	0.169951621439464\\
31.3265306122449	0.105407511545464\\
31.3489795918367	0.224289495477002\\
31.3714285714286	0.017398821721832\\
31.3938775510204	0.220242081935317\\
31.4163265306122	0.0581168683478373\\
31.4387755102041	0.167422719592714\\
31.4612244897959	0.113452981180784\\
31.4836734693878	0.124598087385234\\
31.5061224489796	0.139163762932559\\
31.5285714285714	0.0863527235812852\\
31.5510204081633	0.15380179586134\\
31.5734693877551	0.0530480177527223\\
31.5959183673469	0.162430318811353\\
31.6183673469388	0.0340263696986374\\
31.6408163265306	0.162064893029248\\
31.6632653061224	0.0321699709739012\\
31.6857142857143	0.153611747399248\\
31.7081632653061	0.0461514178329456\\
31.730612244898	0.138221163320056\\
31.7530612244898	0.0719539134753954\\
31.7755102040816	0.113747327261964\\
31.7979591836735	0.101492007753016\\
31.8204081632653	0.0797780939352305\\
31.8428571428571	0.120046363161254\\
31.865306122449	0.0222250956956642\\
31.8877551020408	0.0723522212361531\\
31.9102040816327	0.0318254050156582\\
31.9326530612245	0.0447474800580919\\
31.9551020408163	0.0837604759029382\\
31.9775510204082	0.0703514666659632\\
32	0.0577190527657713\\
32.5	0.0504538073336781\\
32.7894736842105	0.0457854924348153\\
33.0789473684211	0.0431836791308336\\
33.3684210526316	0.0409196156060274\\
33.6578947368421	0.0378572655732873\\
33.9473684210526	0.0364336105426865\\
34.2368421052632	0.0342018309715657\\
34.5263157894737	0.032729456594581\\
34.8157894736842	0.0315247048171733\\
35.1052631578947	0.0302823043368353\\
35.3947368421053	0.0292254376289667\\
35.6842105263158	0.0282180160509859\\
35.9736842105263	0.0273196893828335\\
36.2631578947368	0.0264891839683646\\
36.5526315789474	0.0258090037339787\\
36.8421052631579	0.0251169137586261\\
37.1315789473684	0.0244197063101309\\
37.4210526315789	0.0237466360103117\\
37.7105263157895	0.0232978850219706\\
38	0.022715540539848\\
};
\addlegendentry{$|u_{k,N}(x_s)|$}

\addplot [color=mycolor2]
  table[row sep=crcr]{%
25	0.0272707979820263\\
25.3052631578947	0.0279158753218589\\
25.6105263157895	0.0286177398107332\\
25.9157894736842	0.029384805808766\\
26.2210526315789	0.0302273447153909\\
26.5263157894737	0.0311580466043531\\
26.8315789473684	0.0321928043413519\\
27.1368421052632	0.0333518324139197\\
27.4421052631579	0.0346613035269512\\
27.7473684210526	0.0361558121587041\\
28.0526315789474	0.0378822089227839\\
28.3578947368421	0.0399058088576579\\
28.6631578947368	0.0423209316380627\\
28.9684210526316	0.045269867292267\\
29.2736842105263	0.0489795888055377\\
29.5789473684211	0.0538398937030557\\
29.8842105263158	0.0605926501070041\\
30.1894736842105	0.0708850228068858\\
30.4947368421053	0.0894717920902721\\
30.8	0.141315863184209\\
30.9	0.198439736347782\\
30.9224489795918	0.22448810112551\\
30.9448979591837	0.264555452755014\\
30.9673469387755	0.338480270158445\\
30.9897959183673	0.562110602689991\\
31.0122448979592	0.645140339808629\\
31.034693877551	0.354124480834134\\
31.0571428571429	0.271664803158173\\
31.0795918367347	0.228638262379746\\
31.1020408163265	0.201157599838018\\
31.1244897959184	0.181665761157027\\
31.1469387755102	0.166916063444042\\
31.169387755102	0.155252078214551\\
31.1918367346939	0.145728571036544\\
31.2142857142857	0.137761628334949\\
31.2367346938776	0.130968497121844\\
31.2591836734694	0.125086439848525\\
31.2816326530612	0.119928315204243\\
31.3040816326531	0.115356729944984\\
31.3265306122449	0.111268232238097\\
31.3489795918367	0.107583239677247\\
31.3714285714286	0.104239394917708\\
31.3938775510204	0.101187050520374\\
31.4163265306122	0.0983861209282611\\
31.4387755102041	0.095803837830748\\
31.4612244897959	0.0934131176948337\\
31.4836734693878	0.0911913534570233\\
31.5061224489796	0.0891195059923205\\
31.5285714285714	0.0871814112500591\\
31.5510204081633	0.0853632450516171\\
31.5734693877551	0.083653104832047\\
31.5959183673469	0.0820406792785072\\
31.6183673469388	0.0805169848373616\\
31.6408163265306	0.0790741536607649\\
31.6632653061224	0.077705261530796\\
31.6857142857143	0.0764041871486047\\
31.7081632653061	0.075165496248364\\
31.730612244898	0.0739843455205591\\
31.7530612244898	0.0728564024631523\\
31.7755102040816	0.0717777781310667\\
31.7979591836735	0.0707449704004525\\
31.8204081632653	0.069754815858399\\
31.8428571428571	0.0688044488099523\\
31.865306122449	0.0678912661906008\\
31.8877551020408	0.06701289740441\\
31.9102040816327	0.0661671782908976\\
31.9326530612245	0.0653521285689107\\
31.9551020408163	0.0645659322216524\\
31.9775510204082	0.0638069203801006\\
32	0.0630735563372206\\
32.5	0.0512742766821067\\
32.7894736842105	0.0468311566967524\\
33.0789473684211	0.0433460335502597\\
33.3684210526316	0.0405163432389593\\
33.6578947368421	0.0381584668060754\\
33.9473684210526	0.0361536440692695\\
34.2368421052632	0.0344212003113405\\
34.5263157894737	0.0329041511064722\\
34.8157894736842	0.0315609444493395\\
35.1052631578947	0.0303604694498144\\
35.3947368421053	0.0292789065577654\\
35.6842105263158	0.0282976666266525\\
35.9736842105263	0.0274020001726132\\
36.2631578947368	0.0265800335748106\\
36.5526315789474	0.0258220854690514\\
36.8421052631579	0.0251201718746419\\
37.1315789473684	0.0244676414008471\\
37.4210526315789	0.0238589019559834\\
37.7105263157895	0.02328921300932\\
38	0.0227545255940862\\
};
\addlegendentry{$|G_{N}^\app(x_s,x_s)|$}

\addplot [color=black]
  table[row sep=crcr]{%
31.0025590814782	0\\
31.0025590814782	1\\
};
\addlegendentry{$N\pi/h_{\max}$}

\addplot [color=black, dash dot dot]
  table[row sep=crcr]{%
31.95	0\\
31.95	1\\
};
\addlegendentry{$N\pi/h_{\min}$}

\end{axis}

\end{tikzpicture}%
\caption{\label{6_explosion} Amplitude of $u_{k,N}(x^{\text{meas}})$ with respect to $k$ for $x^{\text{meas}}=6$ and a source $b=\delta_{x^{\text{meas}}}$, compared with the amplitude of the Green function $G_N^\app(x^{\text{meas}},x^{\text{meas}})$. For comparison purposes, $N\pi/h_{\max}$ and $N\pi/h_{\min}$ are plotted.}
\end{center}
\end{figure}
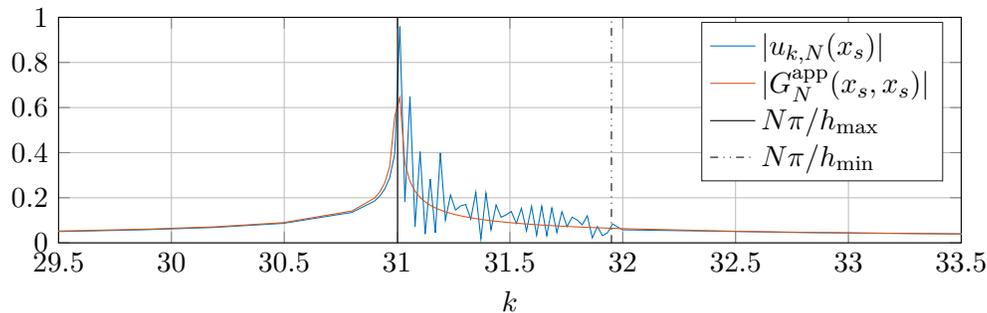

Measuring the section wavefield while $k$ varies and detecting its explosion and its changes of behavior provides a good approximation of the width at the left and the right of the waveguide.

\bibliographystyle{abbrv}
\bibliography{biblio}

\end{document}